\documentclass[11pt,a4paper]{amsart}

\usepackage{amssymb,enumitem,amsmath,amsthm,bbold,wrapfig}
\usepackage[all]{xy}
\usepackage[in]{fullpage}
\usepackage{color,pxfonts}
\definecolor{Chocolat}{rgb}{0.36, 0.2, 0.09}
\definecolor{BleuTresFonce}{rgb}{0.215, 0.215, 0.36}
\definecolor{EgyptianBlue}{rgb}{0.06, 0.2, 0.65}

\usepackage{graphicx,scalerel}

\usepackage[colorlinks,final,hyperindex]{hyperref}
\hypersetup{citecolor=BleuTresFonce, urlcolor=EgyptianBlue, linkcolor=Chocolat}

\usepackage{fourier}

\newcommand\tsbullet[1][.8]{\mathbin{\ThisStyle{\vcenter{\hbox{%
  \scalebox{#1}{$\SavedStyle\bullet$}}}}}%
}

\catcode`\@=11

\catcode`\@=13

\newtheorem{theorem}{Theorem}

\newtheorem{corollary}{Corollary}
\newtheorem{proposition}{Proposition}

\theoremstyle{definition}
\newtheorem{example}{Example}
\newtheorem*{remark}{Remark}
\newtheorem{definition}{Definition}

\DeclareMathAlphabet{\pazocal}{OMS}{zplm}{m}{n}

\def\calB{\pazocal{B}}
\def\calC{\pazocal{C}}
\def\calD{\pazocal{D}}

\def\calL{\pazocal{L}}
\def\calM{\pazocal{M}}

\def\calO{\pazocal{O}}
\def\calP{\pazocal{P}}
\def\calQ{\pazocal{Q}}
\def\calR{\pazocal{R}}
\def\calS{\pazocal{S}}
\def\calT{\pazocal{T}}
\def\calU{\pazocal{U}}
\def\calV{\pazocal{V}}

\def\calX{\pazocal{X}}

\DeclareMathAlphabet{\mathbbold}{U}{bbold}{m}{n}

\def\kk{\mathbbold{k}}

\DeclareMathOperator{\id}{id}
\DeclareMathOperator{\In}{in}
\DeclareMathOperator{\Hom}{Hom}

\DeclareMathOperator{\PL}{\textsl{PreLie}}
\DeclareMathOperator{\NAP}{\textsl{NAP}}
\DeclareMathOperator{\Lie}{\textsl{Lie}}
\DeclareMathOperator{\Ass}{\textsl{Ass}}

\DeclareMathOperator{\uAss}{\textsl{uAss}}
\DeclareMathOperator{\uCom}{\textsl{uCom}}
\DeclareMathOperator{\RT}{\textsl{RT}}

\DeclareMathOperator{\Dend}{\textsl{Dend}}
\DeclareMathOperator{\Br}{\textsl{Br}}

\DeclareMathOperator{\Tw}{Tw}

\DeclareMathOperator{\Coop}{\textbf{Coop}}

\begin{document}

\title{Enriched pre-Lie operads and freeness theorems}

\author{Vladimir Dotsenko}
\address{Institut de Recherche Math\'ematique Avanc\'ee, UMR 7501, Universit\'e de Strasbourg et CNRS, 7 rue Ren\'e-Descartes, 67000 Strasbourg CEDEX, France}
\email{vdotsenko@unistra.fr}

\author{Lo\"ic Foissy}
\address{LMPA, Centre Universitaire de la Mi-Voix, 50, rue Ferdinand Buisson, CS 80699 -- 62228 Calais CEDEX, France}
\email{foissy@univ-littoral.fr}

\begin{abstract}
In this paper, we study the $\calC$-enriched pre-Lie operad defined by Calaque and Willwacher for any Hopf cooperad $\calC$ to produce conceptual constructions of the operads acting on various deformation complexes. Maps between Hopf cooperads lead to maps between the corresponding enriched pre-Lie operads; we prove criteria for the module action of the domain on the codomain to be free, on the left and on the right. In particular, this implies a new functorial Poincar\'e--Birkhoff--Witt type theorem for universal enveloping brace algebras of pre-Lie algebras.  
\end{abstract}

\keywords{operad, pre-Lie algebra, brace algebra, rooted tree, Poincar\'re--Birkhoff--Witt theorem}
\subjclass[2010]{18D50 (Primary), 05C05, 16D40, 17B35 (Secondary)}

\maketitle

\section{Introduction}

Pre-Lie algebras, also known as right-symmetric algebras, appear in a wide range of research areas from algebra and combinatorics to differential geometry and homotopy theory. One of the early examples of a pre-Lie algebra structure is that on the (shifted) Hochschild cohomology complex of an associative algebra which was famously used by Gerstenhaber \cite{MR161898} to introduce a differential graded Lie algebra structure on that complex. That differential graded Lie algebra controls deformation theory of associative algebras, illustrating an important thesis of contemporary deformation theory: any reasonable deformation problem is controlled by an appropriate differential graded Lie algebra \cite{Lurie,MR2628795}. Hochschild cohomology of an associative algebra has a structure of an algebra over the homology of the little disks operad \cite{getzler1994operads}, prompting the celebrated Deligne conjecture that chains of the operad of little disks act on the Hochschild complex. In many existing proofs of that conjecture \cite{MR1321701,MR1805894,MR1890736,MR1328534}, one uses a remarkable differential graded operad which is often referred to as the brace operad. In fact, the existing terminology is a little bit confusing: Getzler \cite{MR1261901} and Kadeishvili \cite{MR1029003} observed that the pre-Lie algebra structure on the shifted Hochschild complex can be regarded as a part of a bigger structure (still concentrated in homological degree zero) which is also called a brace algebra. Such brace algebras also independently appeared in the work of Ronco on free dendriform algebras \cite{MR1800716}, leading to the Cartier--Milnor--Moore type theorem for dendriform algebras \cite{MR1879927,MR1813766}. 

The two different brace operads are in fact intimately related, and their relationship is best explained by the Willwacher's theory of operadic twisting \cite{MR3299688}: the twisting procedure applied to the brace operad of Getzler--Kadeishvili--Ronco is the differential graded brace operad whose different versions were used in various proofs of the Deligne conjecture. In their recent work on a higher version of Kontsevich's formality theorem, Calaque and Willwacher \cite{MR3411136} generalised that approach and defined, for any Hopf cooperad $\calC$ (not necessarily commutative), the $\calC$-enriched operad $\PL_\calC$. It is well known that elements of the classical pre-Lie operad are linear combinations of labelled rooted trees \cite{MR1827084}, where vertices carry numeric labels coming from numbering the inputs of an operation. In the case of the operad $\PL_\calC$, the elements are $\calC$-enriched labelled rooted trees, that is trees whose vertices carry, in addition to the numeric labels, extra decorations from $\calC$ whose arities match the number of inputs of vertices. This type of decoration does not use elements of $\calC$ as mere labels but rather relies on the structure coming from $\calC$, and so has a clear commonality with the classical notion of enrichment from category theory, hence the terminology. In general, throughout the paper, we use the word ``decorated'' when we talk about labels without extra structure, and ``enriched'' when the extra structure on labels plays a key role in the definitions.

Calaque and Willwacher established that the operad $\PL_\calC$ acts on the underlying space of the deformation complex of any map from $\calO$ to a given operad $\calO'$, where $\calO$ is the Koszul dual operad of the cooperad $\calC$; the corresponding differential graded operad of $\calC$-enriched braces is then obtained by operadic twisting. A version of this operad was implicitly defined within the general theory of  natural operations on deformation complexes proposed by Markl \cite{MR2309979}; papers that utilise that theory in the case of the operad $\Lie$ \cite{MR2325698} and in the case of the associative operad \cite{MR3203365,MR3261598} mention the general construction of braces on cohomology of operadic algebras, but that direction does not seem to have been pursued until recently.   

This paper studies the operad $\PL_\calC$ from the algebraic viewpoint. Our main result concerns the map $\PL_\calB\to \PL_\calC$ arising from a Hopf cooperad map $\calB\to\calC$. We prove criteria relating natural properties of that map to freeness of $\PL_\calC$ as a $\PL_\calB$-module (on the left and on the right). Our arguments rely on proposing an enrichment of another operad based on labelled rooted trees, the operad $\NAP$ introduced by Livernet \cite{MR2244257}. It is known that the composition rule in the operad $\NAP$ may be viewed as retaining the ``leading terms'' of the composition in the operad $\PL$; we show that a version of that statement can be established and used in the enriched context as well. Moreover, we obtain a new conceptual interpretation of the operad $\NAP$ and its generalisations via the left adjoint of the derivative functor from operads to Cauchy monoids. 

There are several motivations for our result. Freeness of the left module structure allows one to prove that free $\PL_\calC$-algebras are free as $\PL_\calB$-algebras, generalising known results like the second author's theorem \cite{MR2724224}. Freeness of the right module can be used in the categorical framework for Poincar\'e--Birkhoff--Witt theorems developed in the first author's joint work with Tamaroff~\cite{dotsenko2018endofunctors}, implying a functorial PBW type theorem for universal enveloping $\PL_\calC$-algebras of $\PL_\calB$-algebras. 
In particular, our results lead to a functorial PBW type theorem for universal enveloping brace algebras of pre-Lie algebras, and, as a byproduct, to another proof of a functorial PBW type theorem for universal enveloping dendriform algebras of pre-Lie algebras first proved in \cite{dotsenko2018endofunctors}. A weaker PBW type theorem for universal enveloping brace algebras of pre-Lie algebras was recently proved by Li, Mo, and Zhao \cite{MR3864250} using Gr\"obner--Shirshov bases. Their methods lead to normal forms in universal enveloping algebras and as such are useful for applications, but our result in particular implies that the PBW isomorphisms can be chosen functorially with respect to algebra morphisms.

This is a short note, and we do not intend to overload it with excessive recollections. All vector spaces in this paper are defined over a field $\kk$ of characteristic zero. 
When writing down elements of operads, we use small latin letters as placeholders; if one works with algebras over operads that carry nontrivial homological degrees, there are extra signs which arise from applying operations to arguments via the usual Koszul sign rule. Readers whose expertise comes from combinatorics are invited to consult the monograph \cite{MR2954392} for intuition on algebraic operads. However, for some specific definitions and notation as well as the viewpoint that emphasises the combinatorics of species, we lean towards the monograph~\cite{MR2724388}. In particular, we use the notation $\times$ for the Hadamard tensor product of species and the notation $\cdot$ for the Cauchy tensor product of species, so that 
\begin{gather*}
(\calP\times\calQ)(I)=\calP(I)\otimes\calQ(I),\\ 
(\calP\cdot\calQ)(I)=\bigoplus_{I=J\sqcup K}\calP(J)\otimes\calQ(K).
\end{gather*}
The singleton species is denoted $\mathbb{X}$. We use the notation $\uCom$ and $\uAss$ for the operads of unital commutative associative algebras and unital associative algebras, respectively. A species whose variations play a central role in this note is that of labelled rooted trees; it satisfies the functional equation
 \[
\RT=\mathbb{X}\cdot\uCom(\RT) . 
 \]
In plain words, this equation means that the datum of a labelled rooted tree consists of the root label and a possibly empty disjoint union of labelled rooted trees on the remaining labels (which is nothing but a $\uCom$-product of labelled rooted trees). 

\section{The monoid enriched operad \texorpdfstring{$\NAP$}{NAP} and its algebraic properties}\label{sec:enrichedRT}

Let $\calS$ be a species. The species $\RT_\calS$ of $\calS$-decorated rooted trees is defined by a functional equation
 \[
\RT_\calS=\mathbb{X}\cdot\calS(\RT) .  
 \] 
Explicitly, we have
 \[
\RT_\calS(I)=\bigoplus_{T\in\RT(I)}\bigotimes_{i\in I}\calS(\In_T(i)) .
 \]
Elements of $\RT_\calS$ are labelled rooted trees where each vertex is additionally decorated by an element of $\calS$ whose arity matches the number of incoming edges of that vertex. It is be convenient to think of these trees as elements of the arity zero component of the free operad generated by $\calS$; later in this note, elements of non-zero arities will also become relevant for one of the key constructions. 

The following definition goes back to works of Mendez and his collaborators \cite{Mendez,MR1239514,MR1087798} which in fact date before the ``renaissance of operads'' in mid 1990s. For brevity, we shall use the term ``Cauchy monoid'' for monoids in the monoidal category of species with respect to the Cauchy tensor product.

\begin{definition}
Let $\calM$ be a Cauchy monoid. We define an operad structure on the species $\RT_\calM$ as follows. The result of insertion of a decorated rooted tree $S$ inside a vertex labelled $i$ of another decorated rooted tree $T$ has the underlying labelled rooted tree where $S$ is grafted in the place of the vertex $i$, and all the subtrees growing from the vertex $i$ in $T$ are grafted at the root of $S$. The decorations of all vertices except for the root of $S$ remain the same, while the label of the root of $S$ becomes equal to the product $ab\in\calM$, where $a$ is the decoration of the vertex $i$ in the decorated tree $T$ and $b$ is the original decoration of the root of the tree $S$. This operad is denoted $\NAP_\calM$ and is called the \emph{$\calM$-enriched non-associative permutative operad}.
\end{definition}

In all examples in the literature that we are aware of, only commutative Cauchy monoids $\calM$ are used. For instance, if $\calM=\uCom$ with its natural Cauchy monoid structure, one obtains the operad $\NAP$ of Livernet~\cite{MR2244257}; the cases of some other commutative monoids $\calM$ are behind the $\NAP$-flavoured operads considered in \cite{MR3302959} and in~\cite{sadi2014weighted}.  However, this construction is valid for all Cauchy monoids, and in some cases produces operads very different from $\NAP$, as we shall see below.

\begin{example}
Let us give an example of a calculation in the operad $\NAP_\calM$. Suppose that $a,b\in\calM(\{1\})$, and let~$bb$ denote the image in $\calB(\{1,2\})$ of the tensor $b\otimes b$ in $\calB(\{1\})\otimes\calB(\{2\})$ under the product map $\calB\cdot\calB\to\calB$. Then we have 
 \[
\vcenter{
\xymatrix@M=3pt@R=5pt@C=5pt{
*+[o][F-]{3,e}\ar@{-}[dr] & & *+[o][F-]{4,e}\ar@{-}[dl]\\
 &  *+[o][F-]{1,bb}\ar@{-}[d]& \\
& *+[o][F-]{2,a} & 
}}
\ = \
\vcenter{
\xymatrix@M=3pt@R=5pt@C=5pt{
*+[o][F-]{3,e}\ar@{-}[d]   \\
*+[o][F-]{*,b}\ar@{-}[d] \\
 *+[o][F-]{2,a} 
}}
\ \circ_* \ 
\vcenter{
\xymatrix@M=3pt@R=5pt@C=5pt{
*+[o][F-]{4,e}\ar@{-}[d]  \\
*+[o][F-]{1,b}
}}
\ = \
\vcenter{
\xymatrix@M=3pt@R=5pt@C=5pt{
*+[o][F-]{*,e}\ar@{-}[d]  \\
*+[o][F-]{2,a}
}}
\ \circ_* \ 
\vcenter{
\xymatrix@M=3pt@R=5pt@C=5pt{
*+[o][F-]{3,e}\ar@{-}[dr] & & *+[o][F-]{4,e}\ar@{-}[dl]\\
 &  *+[o][F-]{1,bb}& \\
}}
 \]
Here $e\in\calB(\varnothing)$ is the unit of $\calB$, and vertex labels like $1,b$ mean that the vertex is labelled by $1$ and additionally decorated by $b$. 
\end{example}

We shall now give a presentation of the operad $\NAP_\calM$ by generators and relations. Recall that a labelled rooted tree $T$ is called a \emph{corolla} if for each of its non-root vertices $v$, we have $\In_\tau(v)=\emptyset$. We define an $\calM$-corolla to be a corolla for which the non-root vertices are decorated by the unit of $\calM$. 
\begin{proposition}\label{prop:NAPgen}
The operad $\NAP_\calM$ is generated by $\calM$-corollas. If we denote by $\langle r;s_1,\ldots,s_m\rangle_\alpha$ the corolla with the root labelled by $r$, the non-root vertices labelled by $s_1,\ldots,s_m$, and the root decoration $\alpha$, all relations in the operad $\NAP_\calM$ follow from the relations
\begin{equation}\label{eq:NAP-C} 
\langle\langle r;s_1,\ldots,s_n\rangle_\alpha;t_1,\ldots,t_m\rangle_\beta=\langle r;s_1,\ldots,s_n,t_1,\ldots,t_m\rangle_{\alpha\beta} .
\end{equation}
\end{proposition}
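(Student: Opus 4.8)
The plan is to prove both assertions at once by exhibiting, for every decorated rooted tree, a canonical way of writing it as a composite of its local corollas. For generation I would argue by induction on the number of vertices: if $T$ has root $r$ with decoration $\alpha\in\calM(\In_T(r))$ and maximal subtrees $T_{c}$ rooted at the children $c_1,\dots,c_k$ of $r$, then $T$ is obtained from the $\calM$-corolla $\langle r;c_1,\dots,c_k\rangle_\alpha$ by inserting each $T_{c_j}$ at the leaf $c_j$. Since $c_j$ carries the unit decoration $e\in\calM(\varnothing)$ and has no subtrees growing from it, the insertion $\circ_{c_j}$ simply grafts $T_{c_j}$ in place of $c_j$ and multiplies its root decoration by $e$, leaving it unchanged; performing this at every child reproduces $T$. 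The base case of a one-vertex tree decorated by $g\in\calM(\varnothing)$ is the $0$-leaf corolla $\langle r;\rangle_g$, which is itself an $\calM$-corolla. This proves that $\calM$-corollas generate $\NAP_\calM$ and, at the same time, assigns to every $T$ a distinguished composite $\nu(T)$ of $\calM$-corollas, namely the one obtained by grafting the local corollas of $T$ at leaves, working downwards from the root; by construction $\nu(T)$ maps to $T$ under the composition in $\NAP_\calM$.

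A short computation with the insertion rule shows that \eqref{eq:NAP-C} holds in $\NAP_\calM$: inserting $\langle r;t_1,\dots,t_m\rangle_\beta$ at the root vertex of $\langle r;s_1,\dots,s_n\rangle_\alpha$ regrafts the $s$-leaves onto the surviving root and multiplies the decorations into $\alpha\beta$. Let $P$ be the operad presented by $\calM$-corollas modulo \eqref{eq:NAP-C}. Generation and the validity of the relation give a surjection $\bar\pi\colon P\twoheadrightarrow\NAP_\calM$, and sending each decorated tree $T$ to the class $[\nu(T)]$ and extending multilinearly in the decorations defines a map $\nu\colon\NAP_\calM\to P$ with $\bar\pi\circ\nu=\id$. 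It therefore suffices to show that $\nu$ is surjective, since then $\bar\pi(x)=0$ forces $x=\nu(\bar\pi(x))=0$, so that $\bar\pi$ is an isomorphism and \eqref{eq:NAP-C} accounts for all relations.

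For surjectivity I would prove, by induction on the number of corolla factors, that every composite of $\calM$-corollas lies in the image of $\nu$. A composite of length at least two can be written as $w\circ_i C$ for a single topmost corolla $C=\langle\rho;u_1,\dots,u_q\rangle_\gamma$ and a shorter composite $w$; the inductive hypothesis gives $[w]=[\nu(T_w)]$, so it remains to bring $\nu(T_w)\circ_i C$ to the form $\nu(T')$. If $i$ is a leaf of $T_w$, the composition is a plain leaf-grafting and yields $\nu(T_w\circ_i C)$ directly. If $i$ is an internal vertex, then in $\nu(T_w)$ the input $i$ is the root of the local corolla $C_i=\langle i;d_1,\dots,d_p\rangle_{\alpha_i}$, and $C_i\circ_i C$ is precisely an instance of \eqref{eq:NAP-C}; applying it once merges the two into $\langle\rho;u_1,\dots,u_q,d_1,\dots,d_p\rangle_{\alpha_i\gamma}$, while the subtrees previously hanging at the $d_j$ stay grafted at the corresponding leaves. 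In both cases the result is $\nu(T_w\circ_i C)$, which closes the induction.

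The main obstacle is the internal-vertex step: one must carefully match the free-operadic composition at the input $i$ with the combinatorics of the $\NAP_\calM$-insertion and verify that a single use of \eqref{eq:NAP-C} reproduces the regrafting of all subtrees of $i$, the exact merged set of children, and the ordered product $\alpha_i\gamma$ of decorations. One must also check that the root-relabelling $i\rightsquigarrow\rho$ induced by the insertion is harmless, the point being that the edge of the parent corolla affected by it is internal to the composite and therefore carries no operadic label. Once this local identity is established, no confluence analysis is needed, because peeling off one corolla at a time always returns a composite to normal form before the next reduction.
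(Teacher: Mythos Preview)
Your proposal is correct and follows essentially the same route as the paper: both arguments observe that relation~\eqref{eq:NAP-C} lets one eliminate root-insertions, so that the quotient operad is spanned by composites of corollas grafted only at leaves, which are exactly the $\calM$-decorated rooted trees and hence map bijectively onto the known basis of $\NAP_\calM$. The paper compresses this into a single sentence (``one can use them to show that the operad is spanned by iterated insertions of generators avoiding insertions at the root vertex''), whereas you spell out the section $\nu$ and its surjectivity by an explicit induction peeling off one corolla at a time.
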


\begin{proof}
Relations \eqref{eq:NAP-C} follow from the rule for the operad composition. Moreover, these relations already imply that there is a species surjection $\RT_\calM\twoheadrightarrow \NAP_\calM$. Indeed, one can use them to show that the operad $\NAP_\calM$ is spanned by iterated insertions of generators avoiding insertions at the root vertex, and the combinatorics of those iterated insertions is precisely the combinatorics of $\calM$-enriched labelled rooted trees. Consequently, all relations in $\NAP_\calM$ follow from~\eqref{eq:NAP-C}. 
\end{proof}

Relations \eqref{eq:NAP-C} lead to a different interpetation of the operad $\NAP_\calM$. Recall that for an operad~$\calO$, the species derivative $\partial(\calO)$ defined by the formula
 \[
\partial(\calO)(I):=\calO(I\sqcup\{*\}) 
 \]
has a natural structure of a Cauchy monoid (via composition of operations using just the slot $*$), see, for example, \cite[Sec.~3.4.3]{MR3308919}. It turns out that the functor $\NAP$ can be interpreted as a version of the enveloping operad of a graded algebra defined and studied in \cite{MR4023760}; the reader is encouraged to consult \cite{MR3921622} where a general set-up for studying similar functors is established. 

\begin{proposition}
The derivative functor from operads to Cauchy monoids admits a left adjoint, which is given by the functor $\NAP$. 
\end{proposition}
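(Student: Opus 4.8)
The plan is to establish the adjunction by exhibiting, for every Cauchy monoid $\calM$ and every operad $\calO$, a natural bijection $\Hom(\NAP_\calM,\calO)\cong\Hom(\calM,\partial(\calO))$, where the left-hand side is taken in operads and the right-hand side in Cauchy monoids. The whole argument will run through the presentation of $\NAP_\calM$ by $\calM$-corollas obtained in Proposition~\ref{prop:NAPgen}. The key observation is a bookkeeping one: an $\calM$-corolla $\langle r;s_1,\dots,s_m\rangle_\alpha$ is an arity $(m+1)$ operation whose decoration $\alpha$ ranges over $\calM(\{s_1,\dots,s_m\})$, and its image under an operad map lands in $\calO(\{r,s_1,\dots,s_m\})$; identifying the root label $r$ with the adjoined slot $\ast$ turns this target into $\partial(\calO)(\{s_1,\dots,s_m\})=\calO(\{s_1,\dots,s_m,\ast\})$, which matches the arity of $\alpha$ in $\partial(\calO)$.

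First I would define the forward map. Given an operad morphism $\Psi\colon\NAP_\calM\to\calO$, set $\psi(\alpha):=\Psi(\langle\ast;s_1,\dots,s_m\rangle_\alpha)\in\calO(\{s_1,\dots,s_m,\ast\})=\partial(\calO)(\{s_1,\dots,s_m\})$ for $\alpha\in\calM(\{s_1,\dots,s_m\})$. That $\psi$ is a morphism of species is immediate; that it preserves units follows because $\Psi$ sends the one-vertex corolla decorated by the unit of $\calM$ (which is the operadic unit of $\NAP_\calM$) to the operadic unit of $\calO$; and that it is multiplicative is exactly relation~\eqref{eq:NAP-C}, since the Cauchy product on $\partial(\calO)$ is composition at $\ast$ and composing the two corollas at the root is what \eqref{eq:NAP-C} records, yielding $\psi(\alpha\beta)=\psi(\alpha)\cdot\psi(\beta)$ after the identification $r=\ast$.

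Conversely, given a monoid morphism $\psi\colon\calM\to\partial(\calO)$, I would use Proposition~\ref{prop:NAPgen} to build $\Psi$: the values $\Psi(\langle r;s_1,\dots,s_m\rangle_\alpha):=\psi(\alpha)|_{\ast\mapsto r}$ assign to each generator an element of $\calO$ of the correct arity, and the only thing to check is that this assignment is compatible with the defining relations~\eqref{eq:NAP-C}. That compatibility is precisely the multiplicativity of $\psi$ together with associativity of composition at $\ast$ in $\calO$; since Proposition~\ref{prop:NAPgen} guarantees that \eqref{eq:NAP-C} generates all relations of $\NAP_\calM$, the assignment extends uniquely to an operad morphism. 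The two constructions are visibly inverse to one another, and naturality in $\calM$ and $\calO$ follows from functoriality of $\NAP_{(-)}$ (a monoid map decorates trees and respects the products, hence the relations) and of $\partial$.

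The main obstacle I anticipate is not conceptual but a matter of being scrupulous with conventions, and it is the step where one checks that \eqref{eq:NAP-C} matches multiplicativity in $\partial(\calO)$. One must pin down that the adjoined slot $\ast$ of $\partial(\calO)$ corresponds to the root of a corolla, fix the order in which the Cauchy product $\calM\cdot\calM\to\calM$ is composed at $\ast$ so that it agrees with the order of the product decoration produced by the $\NAP_\calM$ composition rule, and keep track of the relabelling $\ast\leftrightarrow r$ occurring under insertion. The other point deserving care is that one genuinely needs the completeness of the relations in Proposition~\ref{prop:NAPgen}: multiplicativity of $\psi$ ensures compatibility with \eqref{eq:NAP-C}, but well-definedness of $\Psi$ as an operad morphism requires that there be no further relations to verify, which is exactly the content of that proposition.
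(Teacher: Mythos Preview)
Your proposal is correct and follows essentially the same approach as the paper. The paper packages the underlying species-level bijection via Frobenius reciprocity, identifying $\Hom_{\mathrm{species}}(\calM,\partial(\calO))\cong\Hom_{\mathrm{species}}(\mathbb{X}\cdot\calM,\calO)$ and noting that $\mathbb{X}\cdot\calM$ is the species of $\calM$-corollas, whereas you write out the same bijection by hand via the root--$\ast$ identification; both proofs then conclude by observing that multiplicativity of a monoid map matches precisely the relations~\eqref{eq:NAP-C}, invoking Proposition~\ref{prop:NAPgen} for completeness of those relations.
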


\begin{proof}
We have to prove that 
 \[
\Hom_{\mathrm{monoids}}(\calM,\partial(\calO))\cong 
\Hom_{\mathrm{operads}}(\NAP_\calM,\calO).
 \]
We first note that the Frobenius reciprocity law for group representations implies that for all $n\ge 0$ we have
 \[
\Hom_{S_n}(\calM(n),\mathrm{Res}^{S_{n+1}}_{S_n}(\calO(n+1))\cong 
\Hom_{S_{n+1}}(\mathrm{Ind}^{S_{n+1}}_{S_n}(\calM(n)),\calO(n+1)).
 \]
Since $\mathrm{Res}^{S_{n+1}}_{S_n}(\calO(n+1))\cong\partial(\calO)(n)$ as $S_n$-modules and $\mathrm{Ind}^{S_{n+1}}_{S_n}(\calM(n))\cong (\mathbb{X}\cdot\calM)(n+1)$ as $S_{n+1}$-modules, the Frobenuis reciprocity isomorphisms assemble into an isomorphism 
 \[
\Hom_{\mathrm{species}}(\calM,\partial(\calO))\cong 
\Hom_{\mathrm{species}}(\mathbb{X}\cdot\calM,\calO).
 \]
Homomorphisms of Cauchy monoids are maps of species which are compatible with the products. On the other hand, the species $\mathbb{X}\cdot\calM$ is precisely the species of $\calM$-corollas, and morphisms of operads from $\NAP_\calM$ to $\calO$ are maps of generators that are compatible with the relations between them. Examining the relations \eqref{eq:NAP-C}, we see that the conditions we impose in the two cases coincide.
\end{proof}

Since the composition of left adjoint functors is itself a left adjoint, we arrive at the following result which shows that operads arising from the functor $\NAP$ do not have to exhibit any familiar $\NAP$-type features. 

\begin{corollary}
For a free Cauchy monoid $\calM$, the operad $\NAP_\calM$ is free. 
\end{corollary}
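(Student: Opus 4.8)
The plan is to read the statement through the lens of adjunctions, exactly as the phrase preceding the corollary suggests. A free Cauchy monoid is, by definition, a value $\calM=F(\calS)$ of the free-monoid functor $F$ from species to Cauchy monoids, which is left adjoint to the forgetful functor. Combined with the previous proposition, which exhibits $\NAP$ as left adjoint to the derivative functor $\partial$, this shows that the composite $\calS\mapsto\NAP_{F(\calS)}$ is left adjoint to the functor sending an operad $\calO$ to the underlying species of $\partial(\calO)$, namely the species $I\mapsto\calO(I\sqcup\{*\})$. To prove freeness, I would identify this composite left adjoint explicitly rather than merely observing that it is some left adjoint.

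First I would factor the right adjoint above as $\iota$ followed by the species-level derivative, where $\iota$ is the forgetful functor taking an operad to its underlying species and the species derivative sends $\calT$ to $I\mapsto\calT(I\sqcup\{*\})$. The free operad functor $\mathrm{Free}$ is left adjoint to $\iota$, and the functor $\mathbb{X}\cdot(-)$ is left adjoint to the species derivative; this last adjunction is nothing but the Frobenius reciprocity isomorphism already used in the proof of the previous proposition, since $(\mathbb{X}\cdot\calS)(n+1)\cong\mathrm{Ind}^{S_{n+1}}_{S_n}(\calS(n))$ and the species derivative is restriction along $S_n\hookrightarrow S_{n+1}$. Taking left adjoints of a composite functor in the reverse order, the left adjoint of $\iota$ postcomposed with the species derivative is $\mathrm{Free}\circ(\mathbb{X}\cdot(-))$.

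By uniqueness of adjoints, I then conclude $\NAP_{F(\calS)}\cong\mathrm{Free}(\mathbb{X}\cdot\calS)$, so $\NAP_\calM$ is the free operad generated by the species $\mathbb{X}\cdot\calS$ of $\calS$-decorated corollas, which is precisely the desired freeness statement. The only real subtlety, and the step I would be most careful about, is matching the two right adjoints correctly and verifying that the species-level derivative does have $\mathbb{X}\cdot(-)$ as its left adjoint; once the adjunctions are lined up, uniqueness of left adjoints does all the remaining work, with no combinatorial computation required. As a consistency check, $\uCom$ is not a free Cauchy monoid, which is exactly why the genuinely nontrivial operad $\NAP$ itself is not covered by this corollary.
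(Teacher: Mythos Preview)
Your proposal is correct and follows exactly the approach the paper intends: the paper's proof is the single sentence ``the composition of left adjoint functors is itself a left adjoint,'' and you have spelled out precisely what that entails, including the explicit identification of the generating species as $\mathbb{X}\cdot\calS$ via the factorisation of the right adjoint through the forgetful functor and the species derivative. Your version is simply a more detailed rendering of the same argument.
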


We shall now prove two results on module freeness for the functor $\NAP$. The first of them is completely straightforward, while the second one exhibit interesting unexpected subtleties. We invite the reader to compare the two theorems of this section with \cite[Th.~4]{MR3203367}; while the relationship between these results is not at all direct, they follow the same logic.  

\begin{theorem}\label{th:freeNAPmodule-1}
Let $\calB,\calC$ be two connected Cauchy monoids, and let $\phi\colon\calB\to\calC$ be a Cauchy monoid homomorphism, making $\calC$ a $\calB$-bimodule. We shall consider the left $\NAP_\calB$-module structure on $\NAP_\calC$ arising from the map of operads $\NAP(\phi)\colon\NAP_\calB\to\NAP_\calC$. If the Cauchy monoid $\calC$ is free as a left $\calB$-module, the operad $\NAP_\calC$ is free as a left $\NAP_\calB$-module.
\end{theorem}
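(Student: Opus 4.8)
The plan is to exhibit $\NAP_\calC$ explicitly as a free left $\NAP_\calB$-module by writing down its space of generators together with the canonical structure map, and then checking that this map is an isomorphism of species. Recall that as a species $\NAP_\calM=\RT_\calM$, an element being a rooted tree each of whose vertices $v$ carries a decoration in $\calM(\In(v))$, and that the left $\NAP_\calB$-action on $\NAP_\calC$ induced by $\NAP(\phi)$ sends $(p;q_1,\ldots,q_n)$ to the $\NAP_\calC$-composite of $\NAP(\phi)(p)$ with the $q_i$. By the defining insertion rule, this composite grafts each $q_i$ at the root, attaching the subtrees descending from the $i$-th vertex of $p$ to the root of $q_i$ and multiplying the decoration of that root on the left by the image under $\phi$ of the decoration of the $i$-th vertex of $p$. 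The key structural feature I shall exploit is that in $\NAP_\calM$ all this grafting happens at roots of the inserted trees, so the ``skeleton'' contributed by $p$ only ever modifies the root decorations of the $q_i$.

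Using the hypothesis, I would fix an isomorphism $\calC\cong\calB\cdot\calV$ of left $\calB$-modules, where $\calB$ acts by Cauchy multiplication on the left factor; concretely this decomposes every decoration $c\in\calC(\In(v))$ into summands $\calB(P)\otimes\calV(Q)$ indexed by splittings $\In(v)=P\sqcup Q$ of the children of $v$. I would then define the species of generators $\calW\subseteq\NAP_\calC$ to consist of those $\calC$-decorated rooted trees whose root decoration lies in the summand $\calB(\emptyset)\otimes\calV(\In(\text{root}))\cong\calV(\In(\text{root}))$ (i.e.\ is ``$\calV$-pure''), with all remaining vertices decorated by arbitrary elements of $\calC$. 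The structure map of the free module, namely the unique left $\NAP_\calB$-module morphism $\Phi\colon\NAP_\calB\circ\calW\to\NAP_\calC$ extending the inclusion $\calW\hookrightarrow\NAP_\calC$, is then automatically a module map; it remains to prove that $\Phi$ is a bijection of species.

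To invert $\Phi$, I would decompose an arbitrary $\calC$-decorated rooted tree $T$ top-down. Declare the root of $T$ a skeleton vertex; at each skeleton vertex $\sigma$, use the $\calB\cdot\calV$-decomposition of its decoration, $\calB(P_\sigma)\otimes\calV(Q_\sigma)$, to record $a_\sigma\in\calB(P_\sigma)$ as the skeleton decoration of $\sigma$, to promote the $P_\sigma$-children of $\sigma$ to new skeleton vertices, and to keep the $\calV$-part in $\calV(Q_\sigma)$ as the root decoration of the block of $\sigma$; the block itself consists of $\sigma$ together with the entire subtrees of $T$ hanging from its $Q_\sigma$-children, which retain their $\calC$-decorations. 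The resulting partition of $T$ into blocks, the $\calB$-decorated tree on the set of blocks recording the skeleton, and the $\calW$-structures on the blocks then assemble into an element of $\NAP_\calB\circ\calW$, and one checks directly that this is inverse to $\Phi$. Since all constructions are natural and $\kk$-linear and respect the direct-sum decompositions, $\Phi$ is an isomorphism of species, hence of left $\NAP_\calB$-modules, proving freeness.

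The crux --- and the only point requiring genuine care --- is the well-definedness of this top-down reading, specifically the claim that once one descends along a $\calV$-labelled edge one never meets a skeleton vertex again, so that each block is read off unambiguously as a full collection of subtrees. This is exactly where the root-grafting nature of $\NAP$ enters: because the left action attaches the trees contributed by the skeleton only at roots, no skeleton vertex can occur strictly below the root of a block, and the decomposition of $T$ is forced. It is precisely the failure of this root-grafting property for $\PL$ that makes the corresponding question for $\PL_\calC$ genuinely subtle. I expect no further obstacles; the remaining verifications that $\Phi$ and its inverse are mutually inverse are routine bookkeeping with the insertion rule and the freeness isomorphism $\calC\cong\calB\cdot\calV$.
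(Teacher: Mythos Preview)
Your proposal is correct and follows essentially the same route as the paper: the generating species $\calW$ you define coincides with the paper's $\calT_\calL^l$ (their $\calL$ is your $\calV$), and your top-down decomposition along the $\calB$-edges from the root is exactly the paper's ``maximal connected subgraph $T$ of $T'$ containing the root whose edges are all decorated by $\calB$''. The only difference is presentational --- you build $\Phi$ first and then its inverse, while the paper writes the decomposition $\NAP_\calC\to\NAP_\calB\circ\calT_\calL^l$ first and then checks that the action map inverts it.
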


\begin{proof}
Let us denote by $\calL$ a species that freely generates $\calC$ as a left $\calB$-module, so that on the level of species we have an isomorphism $\calC\cong\calB\cdot\calL$. Using that isomorphism, we may think of each vertex label of a $\calC$-decorated labelled rooted tree as a combination of labels each of which splits the set of incoming edges into an ordered disjoint union of a set decorated by $\calB$ and a set decorated by $\calL$. Let us take an individual tree $T'$ for which each set of incoming edges of each vertex comes with a splitting like that. Let us consider the maximal connected subgraph $T$ of that tree containing the root whose edges are all decorated by $\calB$. We note that in the operad $\NAP_\calC$ we can write $T'$ as a composition
 \[
T'=\gamma(T;S_1,\ldots,S_m),
 \]
where for each $i$ the set of input edges of the root vertex of the tree $S_i$ is decorated by $\calL$. Since the left $\calB$-module $\calC$ is free, this representation leads to a well defined map of species $\NAP_\calC\to\NAP_\calB\circ\calT_{\calL}^l$, where $\calT_{\calL}^l$ is the species of all $\calC$-decorated rooted trees for which the input edges of the root vertex are decorated by $\calL$. Moreover, the map 
 \[
\NAP_\calB\circ\calT_{\calL}^l\hookrightarrow\NAP_\calC\circ\NAP_\calC\to\NAP_\calC
 \]
obtained from the obvious embedding and the operadic composition is the inverse of the map we constructed, and the resulting isomorphism of species $\NAP_\calC\cong\NAP_\calB\circ\calT_{\calL}^l$ is immediately seen to be a left $\NAP_\calB$-module isomorphism.
\end{proof}

The theorem we just proved makes one wonder whether the same is true for right modules. Originally, we thought that it was the case, and it was not until thorough reading of the anonymous referee that we became convinced that one needs additional assumptions for that. For the reader's convenience, let us offer an example illustrating a problem that may emerge. 

\begin{example}
Consider the Cauchy monoid $\calB$ generated by two elements $x,y\in\calB(1)$ subject to one sole relation $xx=yy$. Furthermore, consider the Cauchy monoid $\calC$ generated by three elements $r,x,y\in\calB(1)$ subject to relations $xx=yy$, $xr=yr=rr=0$. Then there is an obvious map $\calB\to\calC$, and it is immediate to see that $\calC$ is a free right $\calB$-module (generated by the unit and $r$). Let us show that the operad $\NAP_\calC$ is not free as a right $\NAP_\calB$-module. For that, we shall consider the following equalities in the operad $\NAP_\calC$:
$$
\vcenter{
\xymatrix@M=3pt@R=5pt@C=5pt{
*+[o][F-]{4,e}\ar@{-}[d] \\
  *+[o][F-]{2,r}\ar@{-}[d]\\
 *+[o][F-]{*,x} & 
}}
\ \circ_* \ 
\vcenter{
\xymatrix@M=3pt@R=5pt@C=5pt{
*+[o][F-]{3,e}\ar@{-}[d]  \\
*+[o][F-]{1,x} 
}}
\ = \
\vcenter{
\xymatrix@M=3pt@R=5pt@C=5pt{
*+[o][F-]{4,e}\ar@{-}[d] & & \\
  *+[o][F-]{2,r}\ar@{-}[d]& *+[o][F-]{3,e}\ar@{-}[dl]\\
 *+[o][F-]{1,xx} & 
}}
\ = \ 
\vcenter{
\xymatrix@M=3pt@R=5pt@C=5pt{
*+[o][F-]{4,e}\ar@{-}[d] & & \\
  *+[o][F-]{2,r}\ar@{-}[d]& *+[o][F-]{3,e}\ar@{-}[dl]\\
 *+[o][F-]{1,yy} & 
}}
\ = \ 
\vcenter{
\xymatrix@M=3pt@R=5pt@C=5pt{
*+[o][F-]{4,e}\ar@{-}[d]   \\
*+[o][F-]{2,r}\ar@{-}[d] \\
 *+[o][F-]{*,y} 
}}
\ \circ_* \ 
\vcenter{
\xymatrix@M=3pt@R=5pt@C=5pt{
*+[o][F-]{3,e}\ar@{-}[d]  \\
*+[o][F-]{1,y} 
}} .
$$
We observe that the equality of the element on the left and the element on the right is a nontrivial relation in the right $\NAP_\calB$-module $\NAP_\calC$: the presence of the decoration $r$ implies that the elements $\vcenter{
\xymatrix@M=3pt@R=5pt@C=5pt{
*+[o][F-]{4,e}\ar@{-}[d] \\
  *+[o][F-]{2,r}\ar@{-}[d]\\
 *+[o][F-]{*,x} & 
}}$ and $\vcenter{
\xymatrix@M=3pt@R=5pt@C=5pt{
*+[o][F-]{4,e}\ar@{-}[d]   \\
*+[o][F-]{2,r}\ar@{-}[d] \\
 *+[o][F-]{*,y} 
}}$ are indecomposable elements of that module. 
\end{example}

This example means that we need to impose some constraints, and in fact, gives a good hint as to what constraint to impose. The final result is as follows. 

\begin{theorem}\label{th:freeNAPmodule-2}
Let $\calB,\calC$ be two connected Cauchy monoids, and let $\phi\colon\calB\to\calC$ be a Cauchy monoid homomorphism, making $\calC$ a $\calB$-bimodule. We shall consider the right $\NAP_\calB$-module structure on $\NAP_\calC$ arising from the map of operads $\NAP(\phi)\colon\NAP_\calB\to\NAP_\calC$.
Suppose that one of the following conditions holds:
\begin{itemize}
\item the Cauchy monoid $\calB$ is free,
\item the Cauchy monoid $\calB$ is free commutative.
\end{itemize}
Then the operad $\NAP_\calC$ is free as a right $\NAP_\calB$-module.
\end{theorem}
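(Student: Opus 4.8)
The plan is to generalise the proof of Theorem~\ref{th:freeNAPmodule-1} by exhibiting an explicit isomorphism of right $\NAP_\calB$-modules $\NAP_\calC\cong\calR\circ\NAP_\calB$, where $\calR$ is a sub-species of ``reduced'' $\calC$-decorated rooted trees. In the left-module situation the analogous decomposition was canonical because the hypothesis $\calC\cong\calB\cdot\calL$ furnished a unique \emph{left} factorisation of each vertex decoration; for right modules we assume nothing about $\calC$ as a module and instead use freeness of $\calB$ to peel $\calB$-structure off towards the leaves. Write $\calB$ as the free, respectively free commutative, Cauchy monoid on a species $\calV$; connectedness forces $\calV(\varnothing)=0$, so every generator has positive arity. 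By the Corollary, $\NAP_\calB$ is then the free operad on the species of $\calV$-corollas, and the right action on $\NAP_\calC$ is generated by the operations $\kappa_w$, $w\in\calV$, whose effect at a vertex $v$ is to right-multiply the decoration of $v$ by $\phi(w)$ and to adjoin to $v$ as many fresh unit-decorated leaves as the arity of $w$.

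First I would introduce the reduction moves inverse to these actions: at a vertex $v$ some of whose incoming leaves are unit-decorated, whenever the decoration of $v$, viewed in the right $\calB$-module $\calC$, factors as $c\cdot\phi(w)$ with $w\in\calV$ supported on such a block of unit-leaves, one strips off the corresponding $\calV$-corolla, deleting that block and replacing the decoration by $c$. Call a tree \emph{reduced} when no move applies, and let $\calR\subseteq\NAP_\calC$ be the species of reduced trees. Because each move is literally the inverse of acting by a single $\kappa_w$, applying moves until none remain rewrites an arbitrary $\calC$-tree as $\gamma(G;S_1,\dots,S_m)$ with $G\in\calR$ and $S_i\in\NAP_\calB$; since $\calV(\varnothing)=0$ every move deletes at least one vertex, so the process terminates. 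This produces two candidate maps, full reduction $\NAP_\calC\to\calR\circ\NAP_\calB$ and the composite $\calR\circ\NAP_\calB\hookrightarrow\NAP_\calC\circ\NAP_\calB\to\NAP_\calC$ given by the module action; the latter is a morphism of right $\NAP_\calB$-modules by construction, and the two are mutually inverse as soon as full reduction is well defined.

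The crux, and the precise point at which the preceding example rules out dropping the hypothesis, is that the reduced form must be independent of the order in which the moves are applied; this is where freeness of $\calB$ is decisive, and the argument follows the same logic as \cite[Th.~4]{MR3203367}. In the free non-commutative case, the canonical splitting $\overline{\calB}\cong\calB\cdot\calV$ isolates a unique right-most generator of each decoration, so at most one reduction is available at any given vertex and reductions at distinct vertices commute; confluence is then immediate, and the coincidence $xx=yy$ exploited in the example is nothing but the failure of this unique factorisation for a non-free $\calB$. In the free commutative case $\calB\cong S(\calV)$ there is a genuine choice of which generator to peel first, and here I would argue by local confluence: two competing moves at one vertex come from two generators $w,w'$ appearing in its decoration, and the identity $\kappa_w\circ\kappa_{w'}=\kappa_{w'}\circ\kappa_w$ in $\NAP_\calB$, valid because $S(\calV)$ is commutative, shows that the two orders of peeling land on the same element of $\calR\circ\NAP_\calB$; moves at distinct vertices again commute for trivial reasons.

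Finally I would combine termination with local confluence via Newman's lemma to conclude that full reduction is a well-defined map of species inverse to the action map, yielding the isomorphism $\NAP_\calC\cong\calR\circ\NAP_\calB$ of right $\NAP_\calB$-modules. In the commutative case one extra verification is needed: that $\calR$ and the reduction are equivariant for the symmetric-group actions permuting the leaf labels, so that the result is an isomorphism of species and not merely of underlying graded vector spaces. This equivariance is the most delicate piece of bookkeeping, but it is forced by the $S_n$-symmetry of the factorisations in $S(\calV)$, completing the argument.
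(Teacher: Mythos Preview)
Your proposal has a genuine gap at exactly the point you flag as ``the crux''. The paper's proof begins by choosing a species $\calR$ that freely generates $\calC$ as a right $\calB$-module, so that $\calC\cong\calR\cdot\calB$; this hypothesis is used throughout (and is explicitly listed in the downstream Theorem~\ref{th:freePLmodule}, so its absence from the present statement appears to be an oversight). You declare instead that ``for right modules we assume nothing about $\calC$ as a module'', and this is where the argument breaks. Your reduction move replaces a vertex decoration $c\in\calC$ by some $c'$ with $c=c'\cdot\phi(w)$, but without freeness of $\calC$ over $\calB$ neither existence nor uniqueness of $c'$ is controlled, so the rewriting relation is not well defined as a map of species. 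In the free non-commutative case, the splitting $\overline{\calB}\cong\calB\cdot\calV$ you invoke is a statement about elements of $\calB$, not about decorations in $\calC$; nothing prevents an element $c\in\calC$ from admitting two unrelated factorisations $c=c_1\cdot\phi(w_1)=c_2\cdot\phi(w_2)$. In the free commutative case your local-confluence step needs that from $c=c_1\cdot\phi(w_1)=c_2\cdot\phi(w_2)$ one can still peel $\phi(w_2)$ off $c_1$; the identity $\kappa_{w_1}\circ\kappa_{w_2}=\kappa_{w_2}\circ\kappa_{w_1}$ concerns the \emph{forward} action and yields no divisibility information inside $\calC$, so Newman's lemma does not apply.

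If one does assume $\calC\cong\calR\cdot\calB$, every decoration acquires a canonical form $x\cdot y_1\cdots y_p$ with $x\in\calR$ and each $y_i$ a generator of $\calB$, and then your rewriting approach becomes a valid rephrasing of the paper's argument: the paper simply reads off the normal form directly by peeling the ``very good'' $\calB$-factors at each important vertex (all of them in the commutative case, only the terminal run in the free case), rather than reaching the same normal form by an iterated confluent reduction.
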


\begin{proof}
Let us denote by $\calR\subset\calC$ a species that freely generates $\calC$ as a right $\calB$-module, so that on the level of species we have $\calC\cong\calR\cdot\calB$. Using that isomorphism, each $\calC$-decorated labelled rooted tree can be written as a combination of trees for which each label of each internal vertex is a product $x\cdot y\in\calR(J)\otimes \calB(K)\subset\calR\cdot\calB$; here $J\sqcup K$ is the set of the input edges of that internal vertex. Moreover, because of our hypothesis on $\calB$, we may assume that $y=y_1\cdots y_p\in \calB(K_1)\otimes\cdots\otimes\calB(K_p)$ is a product of \emph{generators} of $\calB$ (either defined uniquely or uniquely up to permutation of factors). For an individual tree $T'$ like that, we shall refer to the edges from $J$ as $r$-edges and to the edges from $K$ as $b$-edges. 

For the following definition, we shall fix a vertex $j$ of our tree $T'$. We shall call $j$ important if the maximal subtree of $T'$ rooted at $j$ contains at least one $r$-edge. For an important vertex $j$, let $x\cdot y_1\cdots y_p\in \calR(J)\otimes\calB(K_1)\otimes\cdots\otimes\calB(K_p)$ be the decomposition of its label. We shall call the $\calB$-factor~$y_i$ very good if for each $b$-edge $e$ from $K_i$, all of its descendant edges (edges $e'$ for which the path to the root from $e'$ passes via $e$) are $b$-edges as well. Very good factors more or less determine what can be factored out as the right action of the operad $\NAP_\calB$: it is at this point that it becomes important whether $\calB$ is free as a Cauchy monoid or as a commutative Cauchy monoid, and we shall now explain how to deal with both cases.   

\textsl{Case 1. } If the Cauchy monoid $\calB$ is free, all the ``terminal'' very good $\calB$-factors of important vertices can be factored out. To make this precise, for each important vertex $j$, we define the subtree $T_j$ of $T'$ as follows: it is rooted at $j$ and includes all descendants of $j$ for which the path to the root passes via an edge from~$K_s$, for each $s$ such that the factors $y_s$, \ldots, $y_p$ are all very good. (In particular, for some vertices $j$, the subtree $T_j$ consists just of the vertex $j$: this means that the last $\calB$-factor $y_p$ is not very good.) Then in the operad $\NAP_\calC$ we can write any tree $T'$ with the set of important vertices $\{j_1, \ldots, j_m\}$ as a composition
 \[
T'=\gamma(S;T_{j_1},\ldots,T_{j_m}),
 \]
where the tree $S$ has $m$ vertices, and the last $\calB$-factor of each of them, if exists, is not very good. Such trees $S$ span a species that we denote $\calT_{\calR}^r$. 

\textsl{Case 2. } If the Cauchy monoid $\calB$ is free commutative, all the very good factors of important vertices can be factored out. Namely, for each important vertex $j$, we define the subtree $T_j$ of $T'$ as follows: it is rooted at $j$ and includes all descendants of $j$ for which the path to the root passes via an edge from~$K_s$, for each $s$ such that the factor $y_s$ is very good. (In particular, for some vertices $j$, the subtree~$T_j$ consists just of the vertex $j$: this means that this vertex has no very good $\calB$-factors.) Then in the operad $\NAP_\calC$ we can write any tree $T'$ with the set of important vertices $\{j_1, \ldots, j_m\}$ as a composition
 \[
T'=\gamma(S;T_{j_1},\ldots,T_{j_m}),
 \]
where the tree $S$ has $m$ vertices, and each of them has no very good $\calB$-factors. Such trees $S$ span a species that we denote $\calT_{\calR}^r$. 

Combining the freeness of the right $\calB$-module $\calC$ with the freeness of the Cauchy monoid $\calB$, in each of these two cases our composition formula leads to a well defined map of species $\NAP_\calC\to\calT_\calR^r\circ \NAP_\calB$. Moreover, the map 
 \[
\calT_\calR^r\circ \NAP_\calB\hookrightarrow\NAP_\calC\circ\NAP_\calC\to\NAP_\calC
 \]
obtained from the obvious embedding and the operadic composition is the inverse of the map we constructed, and the resulting isomorphism of species $\NAP_\calC\cong\calT_\calR^r\circ \NAP_\calB$ is immediately seen to be a right $\NAP_\calB$-module isomorphism.
\end{proof}

\section{The Hopf cooperad enriched operad \texorpdfstring{$\PL$}{PL} and its algebraic properties}\label{sec:enrPL}

The operad $\NAP$ is, in a sense, a degeneration of a much more interesting operad on the linearisation of the species of rooted trees, the pre-Lie operad. Let us recall the construction of that operad due to Chapoton--Livernet~\cite{MR1827084}. The underlying species of the operad $\PL$ is also the species $\RT$ of labelled rooted trees, but the insertion of a labelled rooted tree $S$ at a vertex $i$ of a labelled rooted tree~$T$ is equal to the sum
 \[
\sum_{f\colon \In_T(i)\to J} T\circ_i^f S ,
 \]
where the sum is over all functions $f$ from the set of incoming edges of the vertex labelled $i$ to the set~$J$ of vertices of $S$; the labelled rooted tree $T\circ_i^f S$ is obtained by grafting the tree $S$ in the place of the vertex $i$, and grafting the subtrees growing from the vertex $i$ in $T$ at the vertices of $S$ according to the function $f$, so that the set of incoming edges of each vertex $j$ becomes $\In_S(j)\sqcup f^{-1}(j)$. Of course, the $\NAP$ insertion corresponds to the function $f$ for which $f^{-1}(j)=\emptyset$ for all $j$ different from the root. 

We are not aware of a way to generalise this construction to an operad structure on $\RT_\calM$ where~$\calM$ is an arbitrary Cauchy monoid; it turns out that the right structure on the species of decorations is that of a Hopf cooperad. The corresponding definition was originally given by  Calaque and Willwacher \cite[Sec.~3.1.2]{MR3411136}; we spell it out in detail to ensure consistency with our terminology and notation.

We feel that it would be beneficial to the reader to have a reminder of precise definitions related to operads and to Hopf cooperads so that there is no confusion among the existing variations of that notion (for example, the references \cite{MR3616816,khoroshkin2017real} assume Hopf cooperads commutative while the references \cite{MR2724388,getzler1994operads} do not). 

\begin{definition}
A \emph{cooperad} is a coassociative comonoid in the category of species equipped with the operation $\circ'$ defined by
\begin{gather*}
\calP\circ'\calQ=\prod_n \left(\calP(n)\otimes \calQ^{\cdot n}\right)^{S_n}.
\end{gather*}
\end{definition}
We remark that for each cooperad $\calC$, we may use the structure map $\Delta\colon\calC\to\calC\circ'\calC$ followed by appropriate counit maps $\calC\twoheadrightarrow\calC(1)\to\kk$ to get a cooperad with respect to the definition involving ``composition coproducts'' \cite{MR3616816}. It is well known that for each tree $T$ with the set of leaves $I$ there is a map $\Delta_T\colon\calC(I)\to\calT^c(\calC)(I)$ from the component of the cooperad $\calC$ with the indexing set $I$ to the component of the cofree cooperad on $\calC$ with the same indexing set; we shall refer to this map as the decomposition map according to the tree $T$. We refer the reader to \cite[Th.~9.1.9 and Sec.~C.1]{MR3616816} for a discussion of this construction in the case of $\calC(0)=\{0\}$ and $\calC(1)\cong\kk$; if one merely needs to construct the map, and not establish an equivalence of several different definitions, these assumptions are not necessary, and the map $\Delta_T$ is readily available. 

\begin{definition}
A \emph{Hopf cooperad} is a monoid in the symmetric monoidal category $(\Coop,\times,\uCom^*)$ arising from the ``standard'' (factor-wise) structure of a cooperad on the Hadamard product $\calC_1\times\calC_2$ of two cooperads. In plain words, a Hopf cooperad is a cooperad $\calC$ equipped with an associative product $\mu\colon \calC\times\calC\to \calC$ and a unit map $\eta\colon\uCom^*\to\calC$ which are morphisms of cooperads and satisfy the usual axioms of the product and the unit in a monoid. A Hopf cooperad $\calC$ is said to be \emph{connected} if the map $\eta_0\colon \kk=\uCom^*(0)\to \calC(0)$ is an isomorphism.
\end{definition}

The next result is essentially a dual of \cite[Th.~2.3.3]{MR2423811}; because of its importance for our arguments, we give a complete proof. A particular case of this result is also implicit in~\cite{MR3411136}.

\begin{proposition}\label{prop:monoid}
Let $\calC$ be a connected Hopf cooperad. Then the underlying species of $\calC$ can be given a structure of a Cauchy monoid (denoted by~$\calC^{\tsbullet}$). This Cauchy monoid structure depends only on the Hopf cooperad structure on $\calC$, and therefore it is functorial with respect to maps of cooperads. 
\end{proposition}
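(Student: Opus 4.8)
The plan is to manufacture the Cauchy product $\calC^{\tsbullet}\cdot\calC^{\tsbullet}\to\calC^{\tsbullet}$, that is, a family of maps $m_{J,K}\colon\calC(J)\otimes\calC(K)\to\calC(J\sqcup K)$, out of the two pieces of structure at our disposal: the Hadamard (arity-preserving) product $\mu_I\colon\calC(I)\otimes\calC(I)\to\calC(I)$ and the cooperadic decomposition maps $\Delta_T$. The only difficulty is that $\mu$ preserves arity while the Cauchy product must raise it, so the first order of business is to produce arity-raising maps from the cooperad structure. The key observation is that connectedness lets us do exactly this: given $J\subseteq I$ with $I=J\sqcup K$, consider the tree $T_{J,K}$ whose root vertex has input set $I$ and which carries, at each input indexed by $K$, one arity-zero vertex. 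Its set of leaves is $J$, so the corresponding decomposition map reads $\Delta_{T_{J,K}}\colon\calC(J)\to\calC(I)\otimes\bigotimes_{k\in K}\calC(\varnothing)$; since $\calC$ is connected, $\calC(\varnothing)\cong\kk$, and we obtain an ``extension'' map $\Lambda^I_J\colon\calC(J)\to\calC(I)$ that fills the new inputs $K$ with arity-zero units.

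With these maps in hand, I would define the product by
\[
a\tsbullet b:=\mu_I\bigl(\Lambda^I_J(a),\Lambda^I_K(b)\bigr)\in\calC(I),\qquad a\in\calC(J),\ b\in\calC(K),\ I=J\sqcup K,
\]
and take the generator of $\calC(\varnothing)\cong\kk$ as the unit. (On the one-dimensional cooperad $\uCom^*$ one checks immediately that this recovers $e_J\tsbullet e_K=e_{J\sqcup K}$, i.e.\ the natural monoid structure alluded to for $\NAP$.) The two bookkeeping facts I would isolate first are: the extension maps are transitive, $\Lambda^M_I\circ\Lambda^I_J=\Lambda^M_J$ for $J\subseteq I\subseteq M$, which is precisely the coassociativity of the cooperad applied to the nested stub-trees; and the unit extends to a unit, $\Lambda^I_J(e_J)=e_I$, which follows because $\eta$ is a morphism of cooperads and hence intertwines the maps $\Delta_T$.

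The verification of the monoid axioms then proceeds formally. Unitality is immediate from $\Lambda^I_J(e_J)=e_I$, $\Lambda^J_J=\id$, and unitality of $\mu$. For associativity, the crucial input is that $\mu$ is a morphism of cooperads: this makes each $\Lambda^M_I$ commute with the Hadamard product, $\Lambda^M_I\circ\mu_I=\mu_M\circ(\Lambda^M_I\otimes\Lambda^M_I)$, the arity-zero components being absorbed by connectedness. Feeding this together with the transitivity of the $\Lambda$'s and the associativity of $\mu_M$ into the two iterated products $m(m(a,b),c)$ and $m(a,m(b,c))$ collapses both to $\mu_M\bigl(\Lambda^M_J a,\Lambda^M_K b,\Lambda^M_L c\bigr)$ on $M=J\sqcup K\sqcup L$, giving associativity. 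Finally, since $\Delta_T$, $\mu$, $\eta$ and the isomorphism $\calC(\varnothing)\cong\kk$ are all canonical and natural in $\calC$, the maps $\Lambda$ and $m$ involve no auxiliary choices; a morphism of Hopf cooperads commutes with all of them, so $\calC\mapsto\calC^{\tsbullet}$ is functorial, establishing the last sentence of the statement.

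I expect the main obstacle to be the compatibility between the arity-raising extension maps and the Hadamard product $\mu$, namely the identity $\Lambda^M_I\circ\mu_I=\mu_M\circ(\Lambda^M_I\otimes\Lambda^M_I)$. This is exactly where the Hopf axiom (that $\mu$ be a cooperad morphism, not merely a product of species) and the connectedness hypothesis (to discard the arity-zero tensor factors cleanly) are both indispensable, and it requires care to track the symmetric-group equivariance so that the $m_{J,K}$ assemble into a genuine morphism of species $\calC\cdot\calC\to\calC$ rather than a mere collection of linear maps.
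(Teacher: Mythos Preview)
Your proposal is correct and follows essentially the same route as the paper: your extension maps $\Lambda^I_J$ are exactly the arity-raising maps the paper builds from the cooperad decomposition combined with connectedness (its display \eqref{eq:unit}), and your product $a\tsbullet b=\mu_I(\Lambda^I_J a,\Lambda^I_K b)$ is precisely the composite \eqref{eq:prodIJ}. The paper's associativity and unit arguments invoke the same ingredients you isolate---that $\mu$ and $\eta$ are cooperad morphisms, and coassociativity/counitality of the decomposition maps---though you package the associativity step more explicitly via the identity $\Lambda^M_I\circ\mu_I=\mu_M\circ(\Lambda^M_I\otimes\Lambda^M_I)$.
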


\begin{proof}
Because of the connectedness assumption, the composite 
\begin{equation}\label{eq:unit}
\calC(n)\to(\calC\circ'\calC)(n)\to(\calC(n+m)\otimes(\calC(1)^{\otimes n}\otimes\calC(0)^{\otimes m}))^{S_m}\to (\calC(n+m)\otimes(\kk^{\otimes n}\otimes\calC(0)^{\otimes m}))^{S_m}
\end{equation}
of the full cooperad decomposition map, the projection on the appropriate summand of $\calC\circ'\calC$ (the summand of $\left(\calC(n+m)\otimes \calC^{\cdot (n+m)}\right)^{S_{n+m}}$ where we take the terms of $\calC^{\cdot (n+m)}(n)$ corresponding to partitions into $n$ singletons, and rearrange terms using the symmetric group actions using the standard identification of invariants with coinvariants), and the cooperad counit $\calC(1)\to\kk$ can be viewed as a map $\calC(n)\to\calC(n+m)^{S_m}\subset\calC(n+m)$. Consequently, for all $I,J$ we have a sequence of maps
\begin{equation}\label{eq:prodIJ}
\calC(I)\otimes\calC(J)\to\calC(I\sqcup J)\otimes\calC(I\sqcup J)\to \calC(I\sqcup J),
\end{equation}
where the last arrow is simply the product in the algebra $\calC(I\sqcup J)$; the datum of all such maps is precisely a map $\nu\colon\calC\cdot\calC\to\calC$. The associativity of $\nu$ follow from the fact that $\mu$ is a morphism of cooperads, from the associativity of the product $\mu$, and from the coassociativity and counitality of the cooperad decomposition maps. Let us show that the element $1\in\kk=\calC(0)$ is the unit of the associative product~$\nu$. We note that since the unit map $\eta$ is a morphism of cooperads, the image of the composite
\begin{equation}\label{eq:produnit}
\kk=\calC(0)\to(\calC\circ'\calC)(0)\to(\calC(n)\otimes \calC(0)^{\otimes n})^{S_n}
\end{equation}
of the full cooperad decomposition map and the projection on the appropriate component of $\calC\circ'\calC$ sends the basis element $1\in\kk=\calC(0)$ to the $\eta_n(1)\otimes1^{\otimes n}$, where $\eta_n(1)$ is the image of the basis element $1\in\kk=\uCom^*(n)$ under the unit map $\eta\colon\uCom^*\to\calC$. Since $\eta_n(1)$ is precisely the unit of the associative algebra $\calC(n)$, this proves the unit axiom for the product $\nu$. Consequently, $\calC$ acquires a Cauchy monoid structure.
\end{proof}

We are now ready to define the protagonist of this paper, the operad $\PL_\calC$.

\begin{definition}
Let $\calC$ be a connected Hopf cooperad. We shall define an operad structure on $\RT_\calC$ as follows. Let $S,T$ be two $\calC$-decorated rooted trees. The insertion operation $T\circ_i S$, where $i$ is a vertex of $T$, is defined by the formula 
 \[
\sum_{f\colon \In_T(i)\to J} T\widetilde{\circ}_i^f S 
 \]
generalising the Chapoton--Livernet formula, which we shall now describe. First, the underlying (non-decorated) rooted tree of $T\widetilde{\circ}_i^f S$ is given by the operation $\circ_i^f$ applied to the underlying rooted trees of $T$ and $S$. The decorations of vertices of this tree are as follows. For each vertex coming from the tree $T$ (except for the vertex $i$), its decoration is equal to its decoration in the tree $T$. For decorations of vertices coming from the tree $S$, one has to invoke the cooperad structure of $\calC$. Note that the operation $\circ_i^f$ changes the sets of incoming edges for vertices coming from the tree $S$: the set of incoming edges of each vertex $j$ becomes $\In_S(j)\sqcup f^{-1}(j)$. We shall define an auxiliary tree $S_{i,f}$ which is obtained from the underlying rooted tree of $S$ by creating at each its vertex labelled $j$ new half-edges (leaves) indexed by the set $f^{-1}(j)$. The full set of leaves of the tree $S_{i,f}$ is $\In_T(i)$, therefore one may apply the cooperad decomposition map $\Delta_{S_{i,f}}$ to an element $c\in \calC(\In_T(i))$; since in this tree the set of incoming edges of each vertex $j$ is $\In_S(j)\sqcup f^{-1}(j)$, the decomposition map gives an element from 
\begin{equation}\label{eq:decomp}
\bigotimes_{j\in J}\calC(\In_S(j)\sqcup f^{-1}(j)).
\end{equation} 
At the same time, original decoration of the tree $S$ belongs to
\begin{equation}
\bigotimes_{j\in J}\calC(\In_S(j)).
\end{equation}
To obtain the decoration in $T\widetilde{\circ}_i^f S$ of each vertex $j$ coming from the tree $S$, one has to compute the product
 \[
\calC(\In_\sigma(j)\sqcup f^{-1}(j))\otimes \calC(\In_\sigma(j))\to \calC(\In_\sigma(j)\sqcup f^{-1}(j))\otimes \calC(\In_\sigma(j)\sqcup f^{-1}(j))\to\calC(\In_\sigma(j)\sqcup f^{-1}(j))
 \] 
(defined analogously to the sequence of maps \eqref{eq:prodIJ}) of the decoration arising from the decoration of the vertex $i$ under the decomposition map and the decoration of the vertex $j$ in $S$.
The collection of all insertion operations $T\circ_i S$ makes the collection of $\calC$-decorated rooted trees an operad which is denoted $\PL_\calC$ and called \emph{the $\calC$-enriched pre-Lie operad}.
The construction $\calC\mapsto\PL_\calC$ (creating the $\calC$-enriched pre-Lie operad from a Hopf cooperad $\calC$) is functorial in $\calC$: if $\phi\colon\calC\to\calD$ is a map of connected Hopf cooperads, there is an induced map  
 \[
\PL(\phi)\colon\PL_{\calC}\to\PL_\calD. 
 \]
\end{definition}

\begin{remark}\leavevmode
\begin{enumerate}
\item If $\calC$ is a usual associative and coassociative bialgebra, one can regard it as a Hopf cooperad supported at arity one. One can extend it in an obvious way by an element $1$ of arity zero; the pre-Lie operad constructed of this Hopf cooperad can be interpreted as a linearised version of the ``word operad'' $\mathbb{W}_\calC$ \cite{MR4114993,MR3306080}. 
\item For the case $\calC=\uCom^*$, the decorations of vertices 
are ``trivial'' (each decoration is determined by the number of input edges), and one obtains the pre-Lie operad $\PL$ itself.
\item For the case $\calC=\uAss^*$, decorating each vertex of a tree with an associative (co)operation indexed by the inputs is the same as considering planar rooted trees. Moreover, the way decorations are used in the definition above in fact leads to the classical construction of the brace operad via substitutions of planar rooted trees~\cite{MR1879927,MR1909461}.
\end{enumerate}
\end{remark}

As we mentioned above, the sum defining insertion formula in the operad $\PL$ includes the term describing the tree insertion in the operad $\NAP$; this allows to utilise the operad $\NAP$ as a technical tool in results about the pre-Lie operad \cite{MR2586994,MR2724224}. We shall now see that the same is true for the operad $\PL_\calC$, so that the operad structure of $\PL_\calC$ ``deforms'' the operad structure of $\NAP_\calC$ by adding lower terms. We believe that one can make the word ``deforms'' precise using a formalism similar to that of \cite{MR3096857,sadi2014weighted}, but it is not going to play a role in our arguments which rely on the following elementary combinatorial observation.

\begin{proposition}
Let us consider for each tree $T\in\RT(I)$, the induced partial order on $I$. For any $i\in I$, and for any tree $S\in\RT(J)$, the Chapoton--Livernet composition $T\circ_i S$ in the operad $\PL$ is the sum of all trees whose partial order on $I
\circ_i J$ refines the order obtained from the orders on $I$ and on $J$ by identification of the root vertex of $S$ with $i$ and whose restrictions to $I$ and to $J$ coincide with the partial orders prescribed by $S$ and by $T$ respectively. 
\end{proposition}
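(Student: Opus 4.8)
The plan is to prove the identity by exhibiting an explicit bijection between the index set of the Chapoton--Livernet sum, namely the functions $f\colon\In_T(i)\to J$, and the set of rooted trees appearing on the right-hand side, matching each $f$ with the tree $T\circ_i^f S$. Since every tree occurring in the Chapoton--Livernet sum does so with coefficient $1$, it suffices to check that $f\mapsto T\circ_i^f S$ is a well-defined injection whose image is \emph{exactly} the set of trees $U$ satisfying the three stated conditions: that $\le_U$ refine the glued order $\le_0$ (obtained by identifying the root $r$ of $S$ with $i$), that the induced order on $J$ be $\le_S$, and that the induced order on $I$ (under $r=i$) be $\le_T$.

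First I would verify that each tree $U=T\circ_i^f S$ satisfies the conditions. Writing $c_1,\dots,c_k$ for the children of $i$ in $T$ (the elements of $\In_T(i)$), the operation $\circ_i^f$ grafts the subtree $T_{c_\ell}$ at the vertex $f(c_\ell)$ of $S$ while inserting $S$ intact. Reading off the ancestor relation in $U$ then gives that the induced order on $J$ is $\le_S$ and the induced order on $I$ is $\le_T$, since the descendant subtrees are reattached below $S$-vertices without disturbing the internal order of either $T$ or $S$; and $\le_U$ contains every relation of $\le_0$, because each $\le_T$-ancestor of $i$ sits below $r$ and hence below all of $J$, while each $c_\ell$ sits above $r=i$. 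The one point deserving care is that $S$ embeds in $U$ as a \emph{convex} rooted subtree, with no grafted vertex interposed between two vertices of $J$; this is the precise content of the requirement that the restriction to $J$ reproduce $S$, and it is what will make the correspondence a bijection rather than a surjection onto a strictly larger set.

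To invert the map, given an admissible $U$ I would recover $f$ by declaring $f(c_\ell)$ to be the $U$-parent of $c_\ell$. The key step is to show this parent lies in $J$: any vertex strictly between $r$ and $c_\ell$ in $U$ must belong to $J$, since the restriction-to-$I$ condition forbids an $I$-vertex there (a descendant of $i$ would have to be $\le_T$-strictly-between $i$ and its child $c_\ell$, of which there is none, while an ancestor of $i$ lies below $r$ by refinement of $\le_0$). With $f$ so defined, reconstructing the parent function of $U$ vertex by vertex shows $U=T\circ_i^f S$: the vertices of $J$ retain their $S$-parents (here convexity of $J$ rules out an $I$-vertex spliced along an $S$-edge), the vertices of $I$ other than the $c_\ell$ retain their $T$-parents, and each $c_\ell$ is attached precisely at $f(c_\ell)$. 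Injectivity is then immediate, since distinct $f$ produce distinct parents for some $c_\ell$.

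The main obstacle is exactly this reverse direction: checking that the order conditions cut out the insertion trees and nothing more. The delicate phenomenon is that refinement of $\le_0$ together with the induced-suborder conditions alone would also permit illegal trees in which a grafted subtree is spliced along an edge of $S$; such a tree still induces $\le_S$ on $J$ and $\le_T$ on $I$ as posets, yet is not of the form $T\circ_i^f S$. Excluding these is precisely where one must read the condition on $J$ as asserting that $S$ reappears as a convex, edge-preserving subtree --- equivalently, that contracting all of $J$ to the single vertex $i$ returns the poset $\le_T$. Once this reading is fixed, the parent-function reconstruction above goes through, the correspondence $f\mapsto T\circ_i^f S$ is a bijection, and the claimed equality of sums follows.
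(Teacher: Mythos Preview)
Your argument is correct and considerably more careful than the paper's own proof, which consists of the single line ``This is an immediate reformulation of the definition.'' The paper treats the statement as a tautology and offers no verification at all, so there is no real comparison of approaches to make: you have simply supplied the argument the authors omit.

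More importantly, you have put your finger on a genuine imprecision in the statement. If ``restriction to $J$'' is read as the induced partial order on $J$, the proposition as written is false. For instance, take $T$ to be the corolla $i\to c$ and $S$ the corolla $r\to s$; then the chain $r<c<s$ refines the glued order and restricts to $\le_S$ on $\{r,s\}$ and to $\le_T$ on $\{r,c\}$ as posets, yet it is not of the form $T\circ_i^f S$ for any $f$. Your resolution---requiring $J$ to sit in $U$ as a convex (edge-preserving) subtree, equivalently that no $I$-vertex be interposed between two $J$-vertices---is exactly the intended reading, and with it your bijection $f\mapsto T\circ_i^f S$ goes through. The verification you sketch for the inverse (recovering $f(c_\ell)$ as the $U$-parent of $c_\ell$ and checking that all other parent relations are forced by the order conditions plus convexity) is correct; the key step, that a vertex strictly between $r$ and $c_\ell$ cannot lie in $I$ because that would insert an $I$-vertex between $i$ and its child in $\le_T$, is exactly right.

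So: your proof is sound, and your diagnosis of the ambiguity is a useful clarification that the paper does not provide.
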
 

\begin{proof}
This is an immediate reformulation of the definition.
\end{proof}

This result allows us to refer to the term $T\circ_i^f S$ corresponding to the function $f$ for which $f^{-1}(j)=\emptyset$ for all $j$ different from the root, that is the insertion in the operad $\NAP$, as the \emph{leading term} for the composition in $\PL$. Similarly, we refer to the term $T\widetilde{\circ}_i^f S$ corresponding to the function $f$ for which $f^{-1}(j)=\emptyset$ for all $j$ different from the root as the \emph{leading term} for the composition in $\PL_\calC$. We shall now see that this leading term is given by the composition in the operad $\NAP_{\calC^{\tsbullet}}$ associated to the Cauchy monoid $\calC^{\tsbullet}$ from Proposition \ref{prop:monoid}.

\begin{proposition}\label{prop:LeadingTerms}
In the law for the insertion operation $T\circ_i S$ in the operad $\PL_\calC$, the leading term is precisely the insertion of $S$ at the vertex $i$ of $S$ in the operad $\NAP_{\calC^{\tsbullet}}$.
\end{proposition}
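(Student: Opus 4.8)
The plan is to compare, term by term, the two operadic compositions when restricted to the single summand indexed by the function $f$ with $f^{-1}(j)=\emptyset$ for every $j\neq\text{root}$. First I would unwind the underlying tree. For this specific $f$, the grafting operation $\circ_i^f$ places all the subtrees growing from the vertex $i$ in $T$ at the root of $S$ (since no other vertex of $S$ receives any of the edges of $\In_T(i)$). This is exactly the underlying labelled rooted tree produced by the $\NAP$ insertion rule described in the first definition, so on the level of underlying trees the leading term of $T\circ_i S$ in $\PL_\calC$ and the insertion $T\circ_i S$ in $\NAP_{\calC^{\tsbullet}}$ agree. What remains is to check that the decorations coincide.

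Next I would track the decorations. For a vertex of $S$ other than its root, the number of incoming edges is unchanged (because $f^{-1}(j)=\emptyset$), so the auxiliary tree $S_{i,f}$ creates no new half-edges at that vertex and the relevant component of the decomposition map $\Delta_{S_{i,f}}$ is the counit; hence its decoration is preserved, matching the $\NAP_{\calC^{\tsbullet}}$ rule which also leaves all non-root decorations of $S$ untouched. The root of $S$ is the only vertex whose incoming edge set grows, acquiring $\In_T(i)=f^{-1}(\text{root})$, so all the leaves of $S_{i,f}$ are attached there. The decomposition map $\Delta_{S_{i,f}}$ applied to the decoration $a\in\calC(\In_T(i))$ of the vertex $i$ therefore factors through the component where all of $\In_T(i)$ sits at the root and every other vertex receives the empty set; by counitality this is precisely the composite \eqref{eq:unit}, which is exactly the map used in \eqref{eq:prodIJ} to build the Cauchy monoid product $\calC^{\tsbullet}$.

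Finally I would assemble these observations. The decoration of the root of $S$ in the leading term is obtained by the product of this image of $a$ with the original root decoration $b$ of $S$, computed via the sequence of maps defined analogously to \eqref{eq:prodIJ}; by Proposition~\ref{prop:monoid} this is exactly the Cauchy monoid product $ab\in\calC^{\tsbullet}$, which is the root decoration prescribed by the definition of $\NAP_{\calC^{\tsbullet}}$. Thus both the underlying tree and all decorations agree, proving the claim. The main obstacle I anticipate is purely bookkeeping: making sure that the component of $\Delta_{S_{i,f}}$ relevant to the root genuinely reduces to the composite \eqref{eq:unit} rather than some more general decomposition, i.e.\ verifying that routing all incoming edges to the root and nothing elsewhere forces the other tensor factors to be hit by the counit. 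Once that identification of the decomposition map with the monoid structure map of Proposition~\ref{prop:monoid} is pinned down, the rest is immediate from the definitions.
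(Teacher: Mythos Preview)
Your argument follows essentially the same route as the paper: check that the underlying tree of the leading summand is the $\NAP$ insertion, then verify that the decorations agree by analysing the cooperad decomposition map $\Delta_{S_{i,f}}$ for the special $f$ sending everything to the root, and finally identify the root decoration with the Cauchy monoid product of Proposition~\ref{prop:monoid}.

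One terminological slip is worth flagging. You write that for a non-root vertex $j$ ``the relevant component of $\Delta_{S_{i,f}}$ is the counit''. That is not quite what happens: the component of $\Delta_{S_{i,f}}$ at $j$ lands in $\calC(\In_S(j))$, and its value is the algebra unit $\eta_{|\In_S(j)|}(1)$, not the cooperad counit. The paper makes this explicit by factoring $\Delta_{S_{i,f}}$ into two steps: first the map \eqref{eq:unit} at the root, and then further decomposition at the remaining vertices governed by \eqref{eq:produnit}, which shows that each non-root tensor factor is exactly the unit of the corresponding algebra $\calC(\In_S(j))$. Multiplying by these units leaves the original decorations of $S$ unchanged, which is the conclusion you want. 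This is precisely the ``bookkeeping obstacle'' you anticipate at the end; once you replace the word ``counit'' by the observation that the non-root factors are the Hopf units $\eta(1)$ via \eqref{eq:produnit}, your proof matches the paper's.
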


\begin{proof}
By definition, the underlying labelled rooted tree of $T\circ_i S$ is obtained by the insertion in the operad $\NAP$. Let us examine the $\calC$-decoration of that tree. According to the general rule, one must compute the cooperad decomposition of the decoration of $i$ corresponding to certain tree $\Gamma$. That tree $\Gamma$ is obtained from the labelled rooted tree of $S$ by adding at its root vertex $r$ extra incoming half-edges that are indexed by the set $\In_T(i)$. As an example, for a concrete insertion $T\circ_i S$ we have
 \[
T=\vcenter{\hbox{\xymatrix@M=5pt@R=10pt@C=10pt{
*+[o][F-]{k}\ar@{-}[d] & \\
*+[o][F-]{i}\ar@{-}[d] &\\
*+[o][F-]{j}  
}}} ,\qquad 
S=
\vcenter{\hbox{
\xymatrix@M=5pt@R=10pt@C=10pt{
*+[o][F-]{t}\ar@{-}[dr] & & *+[o][F-]{u}\ar@{-}[dl]\\
 &  *+[o][F-]{s}\ar@{-}[d]& \\
& *+[o][F-]{r} & 
}}}
, \qquad \Gamma= \vcenter{\hbox{\xymatrix@M=5pt@R=10pt@C=10pt{
*+[o][F-]{t}\ar@{-}[dr] & & *+[o][F-]{u}\ar@{-}[dl]\\
k\ar@{-}[dr] &  *+[o][F-]{s}\ar@{-}[d]& \\
& *+[o][F-]{r} & 
}}}
 \]
We note that cooperad decomposition map corresponding to this tree is obtained in two steps. The first is the decomposition 
 \[
\calC(\In_T(i))\to\calC(\In_T(i)\sqcup\In_S(r))
 \]
which is a particular case of the maps \eqref{eq:unit} used to define the Cauchy monoid structure, and the second is made of maps \eqref{eq:produnit} which reproduce the decomposition maps in the cooperad $\uCom^*$. As a consequence, multiplying the decorations of vertices of $S$ by the decorations obtained by applying this decomposition map to the decoration of the vertex $i$ in $T$ simply multiplies the label of the root vertex of $S$ by the label of $i$ on the left; all other decorations are elements $\eta_j(1)$ which are the units of the corresponding algebras. Therefore, we recover the operad structure of $\NAP_{\calC^{\tsbullet}}$. 
\end{proof}

This calculation has one important implication. 

\begin{corollary}\label{cor:PLgen}
The operad $\PL_\calC$ is generated by $\calC$-corollas. 
\end{corollary}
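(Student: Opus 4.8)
The plan is to exploit Proposition~\ref{prop:LeadingTerms}, which identifies the leading term of the composition in $\PL_\calC$ with the composition in $\NAP_{\calC^{\tsbullet}}$, together with Proposition~\ref{prop:NAPgen}, which asserts that $\NAP_{\calC^{\tsbullet}}$ is generated by $\calC$-corollas. Let $\calO\subseteq\PL_\calC$ be the suboperad generated by the $\calC$-corollas; I want to show that $\calO=\PL_\calC$, that is, that every $\calC$-decorated rooted tree lies in $\calO$.

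The main device is a filtration by a combinatorial statistic measuring how refined the partial order of a tree is. For $T\in\RT(I)$ with induced partial order $<_T$, set $N(T)=\#\{(v,w):v<_T w\}$, the number of strict ancestor--descendant pairs. This depends only on the underlying shape of $T$ and is bounded above by $\binom{|I|}{2}$, with equality exactly for chains. The key observation is that the leading term is the unique minimiser of $N$ among the summands of a composition: in $T\circ_i S$, the leading ($\NAP$) term attaches every subtree growing from $i$ directly at the root of $S$, whereas every other term $T\widetilde{\circ}_i^f S$ attaches at least one such subtree at a vertex strictly above the root, so that each vertex of a reattached subtree acquires new ancestors among the vertices of $S$ and $N$ strictly increases. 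Thus evaluating a composition in $\PL_\calC$ produces the $\NAP_{\calC^{\tsbullet}}$-result plus a correction that is a linear combination of decorated trees of strictly larger $N$.

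With this in hand I would argue by downward induction on $N$. Given a target $\calC$-decorated tree $T'$, Proposition~\ref{prop:NAPgen} provides an iterated insertion of $\calC$-corollas (avoiding insertions at the root) whose value in $\NAP_{\calC^{\tsbullet}}$ is exactly $T'$. Evaluating that same iterated insertion in $\PL_\calC$ yields, by Proposition~\ref{prop:LeadingTerms} and the preceding paragraph,
\[
(\text{corolla composite}) \;=\; T' \;+\; \sum_{N(T'')>N(T')} \lambda_{T''}\,T'' .
\]
The left-hand side lies in $\calO$ by construction, and every $T''$ occurring on the right has $N(T'')>N(T')$, hence lies in $\calO$ by the induction hypothesis; the base case $N=\binom{|I|}{2}$ (the chains) has an empty correction sum and is handled directly. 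Rearranging gives $T'\in\calO$, which completes the induction and hence the proof.

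The step I expect to require the most care is the strict monotonicity claim for $N$, namely that every non-leading summand $T\widetilde{\circ}_i^f S$ has strictly larger $N$ than the leading term. This is the quantitative form of the statement that the $\NAP$ composition records the least refined order, and it must be verified by tracking precisely which comparabilities are created when a subtree is grafted above the root of $S$ according to $f$. The decorations play no role in this bookkeeping, as their correct behaviour under the leading term is already guaranteed by Proposition~\ref{prop:LeadingTerms}.
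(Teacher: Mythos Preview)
Your proposal is correct and follows essentially the same approach as the paper. The paper's proof is a single sentence (``This follows from Proposition~\ref{prop:NAPgen} by an inductive argument using refinement of partial orders''), and your statistic $N(T)$ is precisely the natural way to make ``refinement of partial orders'' quantitative and to run the induction; your use of Proposition~\ref{prop:LeadingTerms} is implicit in the paper's argument since that proposition immediately precedes the corollary.
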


\begin{proof}
This follows from Proposition \ref{prop:NAPgen} by an inductive argument using refinement of partial orders.
\end{proof}

We shall now prove the following result which is the main theorem of this paper. 

\begin{theorem}\label{th:freePLmodule}
Let $\calB,\calC$ be connected Hopf cooperads, and let $\phi\colon\calB\to\calC$ be a map of Hopf cooperads. We consider the corresponding map of Cauchy monoids $\phi^{\tsbullet}\colon\calB^{\tsbullet}\to\calC^{\tsbullet}$, the $\calB^{\tsbullet}$-bimodule structure on the Cauchy monoid~$\calC^{\tsbullet}$ defined using the map~$\phi^{\tsbullet}$, and the $\PL_\calB$-bimodule structure on $\PL_\calC$ arising from the map of operads $\PL(\phi)\colon\PL_\calB\to\PL_\calC$.
\begin{enumerate}
\item If the Cauchy monoid $\calC^{\tsbullet}$ is free as a left $\calB^{\tsbullet}$-module, the operad $\PL^\calC$ is free as a left $\PL^\calB$-module.
\item If the Cauchy monoid $\calB^{\tsbullet}$ is free commutative and the Cauchy monoid $\calC^{\tsbullet}$ is free as a right $\calB^{\tsbullet}$-module, the operad $\PL^\calC$ is free as a right $\PL^\calB$-module. 
\end{enumerate}
\end{theorem}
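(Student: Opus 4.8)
The plan is to reduce both statements to their $\NAP$ analogues, Theorems~\ref{th:freeNAPmodule-1} and~\ref{th:freeNAPmodule-2}, by an associated graded argument built on Proposition~\ref{prop:LeadingTerms}, which identifies the leading term of the Chapoton--Livernet composition in $\PL_\calC$ with the composition in $\NAP_{\calC^{\tsbullet}}$. I would filter $\PL_\calC$ by refinement of the induced partial order: for a $\calC$-decorated rooted tree on $I$, take as its degree the number of \emph{incomparable} pairs of vertices (equivalently $\binom{|I|}{2}$ minus the number of comparable pairs), and let $F_p\PL_\calC$ be the span of trees of degree at most $p$. The proposition preceding Proposition~\ref{prop:LeadingTerms} shows that every non-leading term of a composition strictly refines the partial order of the leading term, hence has strictly fewer incomparable pairs; combined with Proposition~\ref{prop:LeadingTerms}, this says that the operad composition is filtered and that its top-degree part is exactly the $\NAP_{\calC^{\tsbullet}}$ composition. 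Thus $\PL_\calC$ is a bounded filtered deformation of $\NAP_{\calC^{\tsbullet}}$, the degree being bounded by $\binom{|I|}{2}$ in each arity.

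This structure is functorial. The map $\PL(\phi)$ only transports decorations along $\phi$ and preserves tree shapes, so it is filtered of degree zero, and by the functoriality in Proposition~\ref{prop:monoid} its top-degree part is $\NAP(\phi^{\tsbullet})$. Hence the left (resp.\ right) $\PL_\calB$-module structure on $\PL_\calC$ induced by $\PL(\phi)$ is filtered, and the top-degree part of its action is the left (resp.\ right) $\NAP_{\calB^{\tsbullet}}$-module structure on $\NAP_{\calC^{\tsbullet}}$ induced by $\NAP(\phi^{\tsbullet})$.

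It then remains to lift freeness from the top-degree part to $\PL_\calC$ itself. For part~(1), freeness of $\calC^{\tsbullet}$ as a left $\calB^{\tsbullet}$-module gives, through Theorem~\ref{th:freeNAPmodule-1}, a generating species $\calT^l_\calL$ together with a left-module isomorphism $\NAP_{\calB^{\tsbullet}}\circ\calT^l_\calL\cong\NAP_{\calC^{\tsbullet}}$. Since $\calT^l_\calL$ sits inside the common underlying species $\RT_\calC$, I would lift its elements tautologically into $\PL_\calC$ and assemble the left-module map $\PL_\calB\circ\calT^l_\calL\to\PL_\calC$ from this embedding and operadic composition. This map is filtered, its top-degree part is the isomorphism of Theorem~\ref{th:freeNAPmodule-1}, and the filtration is bounded in each arity, so an isomorphism on the associated graded forces it to be an isomorphism; thus $\PL_\calC$ is free as a left $\PL_\calB$-module. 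Part~(2) runs identically, using the second branch of Theorem~\ref{th:freeNAPmodule-2} (the hypotheses being that $\calB^{\tsbullet}$ is free commutative and $\calC^{\tsbullet}$ is free as a right $\calB^{\tsbullet}$-module) to produce generators $\calT^r_\calR$ and a right-module map $\calT^r_\calR\circ\PL_\calB\to\PL_\calC$ with the analogous properties.

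The step I expect to demand the most care is checking that nothing degenerates in passing to the associated graded: one must verify that the top-degree part of the filtered $\PL_\calB$-action on $\PL_\calC$ is \emph{precisely} the $\NAP_{\calB^{\tsbullet}}$-action, and that the top-degree part of the lifted module map is \emph{precisely} the $\NAP$-level isomorphism rather than merely a map with the correct source and target. This comes down to matching, degree by degree, the ``very good factor'' decomposition of Theorem~\ref{th:freeNAPmodule-2} (and its left analogue underlying Theorem~\ref{th:freeNAPmodule-1}) with the leading-term behaviour recorded in Proposition~\ref{prop:LeadingTerms}; because the incomparable-pairs degree is not additive under composition, the bookkeeping of the induced filtration on the free modules $\PL_\calB\circ\calT^l_\calL$ and $\calT^r_\calR\circ\PL_\calB$ must be set up with some care. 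Once this compatibility is in place, the boundedness of the filtration makes the concluding ``isomorphism on the associated graded implies isomorphism'' step entirely routine.
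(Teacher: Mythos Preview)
Your proposal is correct and follows essentially the same approach as the paper: both reduce to Theorems~\ref{th:freeNAPmodule-1} and~\ref{th:freeNAPmodule-2} by using Proposition~\ref{prop:LeadingTerms} to compare the $\PL$-module structure to its $\NAP$ ``leading term'' via refinement of partial orders. The paper compresses your filtration/associated-graded argument into the single observation that the change-of-basis matrix between $\PL$-compositions and $\NAP$-compositions is upper triangular with respect to the partial-order refinement, so linear independence (hence freeness on the same generators $\calT^l_\calL$, $\calT^r_\calR$) transfers from $\NAP$ to $\PL$; your explicit incomparable-pairs filtration is exactly a way of making that triangularity precise.
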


\begin{proof}
From Theorems \ref{th:freeNAPmodule-1} and \ref{th:freeNAPmodule-2}, we already know that
\begin{enumerate}
\item if the Cauchy monoid $\calC^{\tsbullet}$ is free as a left $\calB^{\tsbullet}$-module with the species of generators $\calL$, the operad $\NAP_{\calC^{\tsbullet}}$ is free as a left $\NAP_{\calB^{\tsbullet}}$-module with the species of generators $\calT_\calL^l$ of all $\calC$-decorated rooted trees for which the input edges of the root vertex are decorated by~$\calL$,
\item if the Cauchy monoid $\calB^{\tsbullet}$ is free commutative and the Cauchy monoid $\calC^{\tsbullet}$ is free as a right $\calB^{\tsbullet}$-module with the species of generators $\calR$, the operad $\NAP_{\calC^{\tsbullet}}$ is free as a right $\NAP_{\calB^{\tsbullet}}$-module with the species of generators $\calT_{\calR}^r$ spanned by $\calC$-decorated rooted trees for which the vertices have no very good $\calB$-factors.
\end{enumerate}

We claim that the same species of generators work in each of the two cases, that is
\begin{enumerate}
\item if the Cauchy monoid $\calC^{\tsbullet}$ is free as a left $\calB^{\tsbullet}$-module with the species of generators $\calL$, the operad $\PL_{\calC}$ is free as a left $\PL_{\calB}$-module with the species of generators $\calT_\calL^l$ of all $\calC$-decorated rooted trees for which the input edges of the root vertex are decorated by~$\calL$,
\item if the Cauchy monoid $\calB^{\tsbullet}$ is free commutative and the Cauchy monoid $\calC^{\tsbullet}$ is free as a right $\calB^{\tsbullet}$-module with the species of generators $\calR$, the operad $\PL_{\calC}$ is free as a right $\PL_{\calB}$-module with the species of generators $\calT_{\calR}^r$ spanned by $\calC$-decorated rooted trees for which the vertices have no very good $\calB$-factors.
\end{enumerate}

To establish that, we shall use Proposition \ref{prop:LeadingTerms}. Indeed, since the compositions in the operad $\NAP_{\calC^{\tsbullet}}$ are the leading terms of the compositions in the operad $\PL_\calC$, each linear independence in the $\PL_\calB$-module $\PL_\calC$ follows from the same linear independence in the $\NAP_{\calB^{\tsbullet}}$-module $\NAP_{\calC^{\tsbullet}}$ as the coefficients in composition change by an upper triangular matrix corresponding to refinement of partial orders.
\end{proof}

We remark that for any Hopf cooperad $\calB$, the Cauchy monoid $\calB^{\tsbullet}$ has a generator $c$ of arity one corresponding to a section of the counit map; that element $c$ generates a non-free Cauchy submonoid $\uCom^*$, and hence the Cauchy monoid $\calB^{\tsbullet}$ cannot be free. Thus, the first possibility of Theorem \ref{th:freeNAPmodule-2} cannot occur in this case, and one may focus on the case of a free commutative Cauchy monoid. 

\section{Applications and further directions}

\subsection{Verification of the freeness condition for the unit map}

Let us indicate two situations when $\calC$ is free as a $\calB$-module in the particular case $\calB=\uCom^*$ (and the map $\phi\colon\uCom^*\to\calC$ equal to the unit of the Hopf cooperad $\calC$).

\begin{proposition}\label{prop:twocases}
Suppose that one of the following conditions holds: 
\begin{enumerate}
\item the Hopf cooperad $\calC$ is augmented, i.e. there is a map of Hopf cooperads $\epsilon\colon\calC\to\uCom^*$ such that $\epsilon\eta=\id$. 
\item components of the cooperad $\calC$ are finite-dimensional, the corresponding Cauchy monoid $\calC^{\tsbullet}$ is commutative, and $\calC$ is a Hopf cooperad with comultiplication, i.e. there is a map of Hopf cooperads $\nu\colon \calC\to\uAss^*$.
\end{enumerate}
Then the $(\uCom^*)^{\tsbullet}$-module action on $\calC^{\tsbullet}$ via the unit map is free (on the left and on the right).
\end{proposition}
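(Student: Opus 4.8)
The plan is to recognise $(\uCom^{*})^{\tsbullet}$ as a familiar object and then to deduce the statement from two dual structure theorems for connected bimonoids in species. First I would record that $(\uCom^{*})^{\tsbullet}$ is the free commutative Cauchy monoid on a single arity-one generator $x$ (this is exactly the ``non-free submonoid $\uCom^{*}$'' mentioned in the remark after Theorem~\ref{th:freePLmodule}); write $U$ for this Cauchy monoid. Unwinding the product of Proposition~\ref{prop:monoid}, the left action of $x$ on $\calC^{\tsbullet}$ is the coextension-by-counits map $\calC(J)\to\calC(\{*\}\sqcup J)$, and the right action is its mirror image; so the assertion to prove is precisely that $\calC^{\tsbullet}$ is free as a left and as a right $U$-module. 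Since $x$ sits in arity one, its image $\eta^{\tsbullet}(x)$ is nonzero and indecomposable, so in either case it will be available as part of a generating set.

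The engine of the argument is to upgrade $\calC^{\tsbullet}$ to a connected Cauchy bimonoid, with the comultiplication supplied by the extra datum in each hypothesis. Concretely, a linear functional $\lambda\colon\calC(2)\to\kk$ together with the cooperad decomposition along the two-leaf trees yields a Cauchy coproduct $\Delta\colon\calC(I)\to\bigoplus_{I=J\sqcup K}\calC(J)\otimes\calC(K)$, obtained by contracting the binary root of each decomposition against $\lambda$. In case~(1) I would take $\lambda=\epsilon_{2}$, the arity-two component of the augmentation; since $\uCom$ is commutative this $\Delta$ is cocommutative. In case~(2) I would take $\lambda=\nu_{2}$ paired with the distinguished associative product of $\uAss^{*}$, so that associativity makes $\Delta$ coassociative (but not cocommutative). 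The main obstacle, where all the real bookkeeping lives, is verifying the bimonoid compatibility: that this $\Delta$ is a morphism for the product of Proposition~\ref{prop:monoid}. This is a computation with iterated cooperad decomposition maps and the Hopf product, and it is exactly where the hypothesis that $\epsilon$, resp.\ $\nu$, be a map of \emph{Hopf} cooperads (not merely of cooperads) is used. Coassociativity of $\Delta$ reduces to coassociativity of the cooperad structure together with the (co)associativity of $\lambda$, and connectedness of $\calC$ guarantees the resulting bimonoid is connected.

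Once $\calC^{\tsbullet}$ is a connected Cauchy bimonoid, the two cases conclude by dual structure theorems, both valid in characteristic zero. In case~(1) the coproduct is cocommutative; composing $\Delta$ with $\epsilon^{\tsbullet}\colon\calC^{\tsbullet}\to U$ on the first tensor factor makes $\calC^{\tsbullet}$ a left $U$-Hopf-module (module via $\eta^{\tsbullet}$, comodule via $(\epsilon^{\tsbullet}\cdot\id)\Delta$), the Hopf-module axiom being precisely the bimonoid compatibility established above. By the fundamental theorem of Hopf modules for the connected Hopf monoid $U$, this gives $\calC^{\tsbullet}\cong U\cdot(\calC^{\tsbullet})^{\mathrm{co}\,U}$, which is freeness of the left module; the right module is handled by the mirror-image comodule structure. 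This argument uses neither commutativity of $\calC^{\tsbullet}$ nor finite-dimensionality, matching the hypotheses of~(1).

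In case~(2) the product is commutative and $\Delta$ is coassociative, so $\calC^{\tsbullet}$ is a connected, commutative, locally finite Cauchy bimonoid over a field of characteristic zero; the finite-dimensionality hypothesis puts us in the locally finite setting in which Leray's theorem applies and shows that $\calC^{\tsbullet}$ is a \emph{free commutative} Cauchy monoid, say on a species of generators $\calV$. Because $\eta^{\tsbullet}(x)$ is a nonzero indecomposable of arity one, it lies in $\calV(1)$ and may be taken as one of the free generators, so $\calV\cong\kk x\oplus\calV'$ and $\calC^{\tsbullet}$ is the free commutative Cauchy monoid on $\kk x$ with coefficients in the free commutative Cauchy monoid on $\calV'$; this exhibits $\calC^{\tsbullet}$ as free over the polynomial sub-monoid generated by $x$, which is exactly $U$. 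Commutativity makes the left and right module structures coincide, so both freeness statements follow at once. As in case~(1), the delicate point is the construction and compatibility of $\Delta$; the passage from a connected commutative bimonoid to a free commutative monoid is then a black-box appeal to Leray's structure theorem.
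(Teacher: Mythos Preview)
Your proof is correct and, in case~(2), follows essentially the same path as the paper: both construct a coassociative Cauchy coproduct on the commutative monoid $\calC^{\tsbullet}$ from the map $\nu$ and then invoke the Leray-type structure theorem to conclude that $\calC^{\tsbullet}$ is free commutative, hence free over the polynomial submonoid $(\uCom^{*})^{\tsbullet}$.

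Case~(1) is where your argument genuinely diverges. The paper also builds a cocommutative Cauchy coproduct from the augmentation, but then appeals to the twisted Cartier--Milnor--Moore theorem to identify $\calC^{\tsbullet}$ with the universal enveloping algebra of its primitives, recognises $(\uCom^{*})^{\tsbullet}$ as the subenvelope of the singleton Lie subspecies, and obtains module freeness from the Poincar\'e--Birkhoff--Witt theorem for twisted envelopes. You instead bypass the Lie algebra of primitives entirely: using $\epsilon^{\tsbullet}$ to turn $\calC^{\tsbullet}$ into a $U$-comodule, you obtain a $U$-Hopf-module and invoke the fundamental theorem of Hopf modules. Your route is more direct for the bare freeness statement and uses a lighter black box (one does not need cocommutativity for the Hopf-module theorem, only the antipode, which connectedness of $U$ supplies). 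The paper's route yields more structural information along the way, namely the identification of $\calC^{\tsbullet}$ as an enveloping algebra, which is not needed here but is of independent interest.
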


\begin{proof}
To establish this result in the augmented case, we note that the composite  
 \[
\calC\to\calC\circ'\calC\to\uCom^*\circ'\calC
 \]
made of the cooperad structure and the augmentation clearly defines a $\uCom^*$-coalgebra structure on $\calC$. Moreover, since $\calC$ is a Hopf cooperad, the Cauchy monoid structure on $\calC$ and the thus defined $\uCom^*$-coalgebra satisfy the Hopf compatibility relation in the symmetric monoidal category of species with respect to the Cauchy tensor product. From the running connectedness assumption, it follows that the Cauchy monoid $\calC^{\tsbullet}$ is the twisted universal enveloping algebra of the twisted Lie algebra of primitive elements \cite{MR2504663,MR1218107}, and the Cauchy monoid $(\uCom^*)^{\tsbullet}$ is its subalgebra which is the universal envelope of the one-dimensional Lie subalgebra spanned by the singleton species. From the analogue of the theorem of Poincar\'e--Birkhoff--Witt for twisted universal envelopes, it follows that $\calC^{\tsbullet}$ is free as a $(\uCom^*)^{\tsbullet}$-module, both on the left and on the right.  

To establish the result in the case of a cooperad with comultiplication, one starts in the similar way and obtains a map  
 \[
\calC\to\calC\circ'\calC\to\uAss^*\circ'\calC
 \]
which may be used to define a $\uAss^*$-coalgebra structure on $\calC$. Moreover, we assumed the Cauchy monoid~$\calC$ to be commutative, so we have a commutative Cauchy monoid structure and the coassociative coalgebra structure related by the Hopf compatibility relation. Thus, dualising the previous argument, we observe that $\calC^{\tsbullet}$ is a free commutative Cauchy monoid, and hence a free module (both on the left and on the right) over the Cauchy submonoid generated by the singleton species, which is exactly $(\uCom^*)^{\tsbullet}$.
\end{proof}

One important instance where the first situation described by the proposition applies is the case of a Hopf cooperad obtained as cohomology cooperad of a topological operad made of connected spaces; in this case the augmentation is the map that kills all elements of positive homological degree. An instance of the second situation is the case of the cooperad $\uAss^*$ itself, which we shall now discuss in detail. 

\subsection{The ``classical'' brace operad}

As we mentioned above, the operad $\PL_{\uAss^*}$ is the operad $\Br$ whose algebras are classical brace algebras of \cite{MR1261901,MR1029003,MR1800716}. The unit map $\uCom^*\to\uAss^*$ leads to an operad map 
 \[
\PL=\PL_{\uCom^*}\to\PL_{\uAss^*}=\Br, 
 \]
which was previously constructed directly in \cite{MR2287127}. 

\begin{theorem}\label{th:brace}
The brace operad $\Br$ is free as a left $\PL$-module and as a right $\PL$-module.
\end{theorem}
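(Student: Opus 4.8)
The plan is to obtain this as an immediate consequence of the main Theorem~\ref{th:freePLmodule}, applied to the unit map $\eta\colon\uCom^*\to\uAss^*$ of Hopf cooperads, whose associated map of enriched pre-Lie operads is exactly the inclusion $\PL=\PL_{\uCom^*}\to\PL_{\uAss^*}=\Br$. By that theorem, both freeness assertions for $\Br$ over $\PL$ reduce to statements about the Cauchy monoids $(\uCom^*)^{\tsbullet}$ and $(\uAss^*)^{\tsbullet}$: for the left module I need $(\uAss^*)^{\tsbullet}$ to be free as a left $(\uCom^*)^{\tsbullet}$-module, and for the right module I need in addition that $(\uCom^*)^{\tsbullet}$ be free commutative. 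These are precisely the conclusions of Proposition~\ref{prop:twocases} with $\calB=\uCom^*$ and $\calC=\uAss^*$, so the whole task is to check its hypotheses in this case.

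The monoid $(\uCom^*)^{\tsbullet}$ is disposed of immediately: since $\uCom^*(n)=\kk$ is the trivial representation in every arity, it is the free commutative Cauchy monoid on the single arity-one generator given by the singleton species, which settles the extra hypothesis of the right module case. It remains to prove that $(\uAss^*)^{\tsbullet}$ is free as a $(\uCom^*)^{\tsbullet}$-module on both sides, and for this I would use the second alternative of Proposition~\ref{prop:twocases} rather than the first. The augmentation alternative is in fact unavailable: a Hopf cooperad augmentation $\uAss^*\to\uCom^*$ splitting the unit would dualise to an operad section $\uCom\to\uAss$ of the abelianisation, forcing the commutative generator to land on a symmetric element $\tfrac12(x_1x_2+x_2x_1)$ of $\uAss^*(2)$ whose two associativity composites in arity three differ, so no such section exists.

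Thus the work concentrates on the three conditions of the second alternative for $\calC=\uAss^*$. Finite-dimensionality is clear, since $\uAss^*(n)$ has dimension $n!$ with basis dual to the linear orders of $\{1,\dots,n\}$, and the required Hopf cooperad map $\nu\colon\uAss^*\to\uAss^*$ can be taken to be the identity. The only condition demanding a genuine argument is commutativity of the Cauchy monoid $(\uAss^*)^{\tsbullet}$. I would deduce it from the observation that $\uAss^*$ is a \emph{commutative} Hopf cooperad: the operad $\uAss$ is the linearisation of the set operad of linear orders, so its Hadamard comultiplication is group-like (each order maps to its own tensor square), and dually the Hadamard product $\mu$ on each $\uAss^*(I)$ is the commutative pointwise product of functions on linear orders. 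Because the Cauchy product of Proposition~\ref{prop:monoid} is built from $\mu$ through the symmetric recipe~\eqref{eq:prodIJ}---one extends $a\in\calC(I)$ and $b\in\calC(J)$ to the common component $\calC(I\sqcup J)$ and multiplies them with $\mu$---commutativity of $\mu$ forces $a\cdot b=b\cdot a$.

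Granting these verifications, Proposition~\ref{prop:twocases} gives that $(\uAss^*)^{\tsbullet}$ is free over $(\uCom^*)^{\tsbullet}$ on the left and on the right, and Theorem~\ref{th:freePLmodule} then delivers the two freeness statements for $\Br$. I expect the genuine obstacle to lie in the commutativity step, namely in identifying the Hadamard (Hopf) structure on $\uAss^*$ precisely enough to see that it is commutative and that this commutativity is inherited by the Cauchy product.
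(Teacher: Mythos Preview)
Your proposal is correct and follows exactly the paper's route: invoke the second alternative of Proposition~\ref{prop:twocases} for $\calC=\uAss^*$ to obtain freeness of $(\uAss^*)^{\tsbullet}$ over the free commutative Cauchy monoid $(\uCom^*)^{\tsbullet}$ on both sides, then apply Theorem~\ref{th:freePLmodule}. You simply supply more detail than the paper does---the explicit check that $(\uAss^*)^{\tsbullet}$ is commutative via the set-operad diagonal, and the side remark that no Hopf cooperad augmentation $\uAss^*\to\uCom^*$ exists---but the argument is the same.
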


\begin{proof}
According to the second part of Proposition \ref{prop:twocases}, the Cauchy monoid $(\uAss^*)^{\tsbullet}$ is a free module (on either side) over the free commutative Cauchy monoid $(\uCom^*)^{\tsbullet}$, so Theorem \ref{th:freePLmodule} applies. 
\end{proof}

This result has two immediate consequences that we record below. 

\begin{corollary}\leavevmode
\begin{enumerate}
\item Free brace algebras are free when considered as pre-Lie algebras \cite{MR2724224}.
\item There exists an analytic endofunctor $\calU$ such that the underlying vector space of the universal enveloping brace algebra of a pre-Lie algebra $L$ is isomorphic to $\calU(L)$ functorially with respect to pre-Lie algebra maps. 
\end{enumerate}
\end{corollary}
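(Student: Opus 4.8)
The plan is to deduce both freeness statements from the general criterion of Theorem~\ref{th:freePLmodule}, applied to the unit map $\phi\colon\uCom^*\to\uAss^*$ of the Hopf cooperad $\uAss^*$; this is exactly the map inducing $\PL=\PL_{\uCom^*}\to\PL_{\uAss^*}=\Br$. By that theorem, the left $\PL$-module freeness of $\Br$ reduces to the assertion that the Cauchy monoid $(\uAss^*)^{\tsbullet}$ is free as a \emph{left} $(\uCom^*)^{\tsbullet}$-module, while the right $\PL$-module freeness reduces to two facts: that $(\uCom^*)^{\tsbullet}$ is a free \emph{commutative} Cauchy monoid, and that $(\uAss^*)^{\tsbullet}$ is free as a \emph{right} $(\uCom^*)^{\tsbullet}$-module. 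Thus the whole problem is transported from operads to the Cauchy monoids attached to $\uCom^*$ and $\uAss^*$ by Proposition~\ref{prop:monoid}.

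First I would dispose of the easy structural input: $(\uCom^*)^{\tsbullet}$ is the free commutative Cauchy monoid on the singleton species $\mathbb{X}$. Indeed $\uCom^*(n)\cong\kk$ for all $n$, and the product of Proposition~\ref{prop:monoid} glues the unique generators together symmetrically, so the $n$-fold product of $\mathbb{X}$ recovers $\uCom^*(n)$ subject to no relations other than commutativity. This is precisely the hypothesis on $\calB^{\tsbullet}$ needed for the right-module case; and by the remark following Theorem~\ref{th:freePLmodule} one could not replace it by outright freeness, since $(\uCom^*)^{\tsbullet}$ contains the non-free submonoid generated by the arity-one section of the counit.

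The substantive point is that $(\uAss^*)^{\tsbullet}$ is free over $(\uCom^*)^{\tsbullet}$ on both sides, which I would obtain from the second part of Proposition~\ref{prop:twocases} applied to $\calC=\uAss^*$. This requires checking the three hypotheses of that case. The finite-dimensionality is immediate, as $\dim\uAss^*(n)=n!$; the comultiplication hypothesis is trivially met by taking $\nu=\id\colon\uAss^*\to\uAss^*$; so the only genuine verification is that the Cauchy monoid $(\uAss^*)^{\tsbullet}$ is commutative. Granting this, Proposition~\ref{prop:twocases} shows that $(\uAss^*)^{\tsbullet}$ is a free commutative Cauchy monoid, hence free as a module---on either side---over its Cauchy submonoid generated by $\mathbb{X}$, which is exactly $(\uCom^*)^{\tsbullet}$. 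Feeding this back into Theorem~\ref{th:freePLmodule} yields both the left and the right $\PL$-module freeness of $\Br$.

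I expect the one real obstacle to be the commutativity of $(\uAss^*)^{\tsbullet}$, which is not apparent from the non-commutativity of $\uAss$ itself. The resolution is to trace through the construction of Proposition~\ref{prop:monoid}: the Cauchy product of two decorations ends in the Hadamard algebra multiplication on a single component $\uAss^*(I\sqcup J)$, so commutativity of the Cauchy monoid follows once one knows that this componentwise product is commutative. The latter is the dual of the cocommutative (diagonal) Hopf structure on $\uAss(n)\cong\kk[S_n]$, so the whole question collapses to a standard symmetry check. Once commutativity is confirmed, the remainder of the argument is a formal assembly of Propositions~\ref{prop:monoid} and~\ref{prop:twocases} with Theorem~\ref{th:freePLmodule}.
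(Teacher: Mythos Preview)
Your argument is a correct and essentially verbatim reproduction of the paper's proof of Theorem~\ref{th:brace} (the module freeness of $\Br$ over $\PL$), but it stops there and does not prove the Corollary you were asked to prove. The Corollary is the statement \emph{after} Theorem~\ref{th:brace}: its two items are consequences drawn from that module freeness, and those deductions are absent from your text. Concretely, you conclude with ``yields both the left and the right $\PL$-module freeness of $\Br$'', which is the input to the Corollary, not its content.

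For item~(1) the missing step is short: a left $\PL$-module isomorphism $\Br\cong\PL\circ\calT_\calL^l$ evaluated at a space $V$ gives $\Br(V)\cong\PL(\calT_\calL^l(V))$, exhibiting the free brace algebra as a free pre-Lie algebra. For item~(2), however, the missing step is substantive: one must invoke the categorical PBW criterion of \cite[Th.~3.1]{dotsenko2018endofunctors}, which says that a right $\PL$-module isomorphism $\Br\cong\calT_\calR^r\circ\PL$ forces the universal enveloping brace algebra of any pre-Lie algebra $L$ to be functorially isomorphic to $\calT_\calR^r(L)$ as a vector space. Without citing or reproving this result, nothing you have written connects right-module freeness to universal enveloping algebras, so statement~(2) remains unproved. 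Add these two deductions after your final paragraph and the proof is complete.
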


\begin{proof}
The first statement is immediate from the freeness as a left module: if we have a left $\PL$-module isomoprphism $\Br\cong\PL\circ\calT_\calL^l$, the free brace algebra $\Br(V)$ is isomorphic to the free pre-Lie algebra generated by $\calT_\calL^l(V)$.

The second statement follows from the freeness as a right module: if we have a right $\PL$-module isomorphism $\Br\cong\calT_\calR^r\circ\PL$, the result of \cite[Th.~3.1]{dotsenko2018endofunctors} implies that the endofunctor $\calU:=\calT_\calR$ works: the underlying vector space of the universal enveloping brace algebra of a pre-Lie algebra $L$ is isomorphic to $\calT_\calR^r(L)$ functorially with respect to pre-Lie algebra maps.
\end{proof}

It turns out that the second of those results can be immediately used to give a new proof of the following statement that was first proved in~\cite[Th.~4.6]{dotsenko2018endofunctors}

\begin{corollary}
There exists an analytic endofunctor $\calV$ such that the underlying vector space of the universal enveloping dendriform algebra of a pre-Lie algebra $L$ is isomorphic to $\calV(L)$ functorially with respect to pre-Lie algebra maps.
\end{corollary}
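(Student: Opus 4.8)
The plan is to exploit the transitivity of universal enveloping functors, factoring the passage from pre-Lie algebras to dendriform algebras through brace algebras. The classical fact that every dendriform algebra carries a brace structure (going back to Ronco \cite{MR1800716,MR1879927}) is encoded by an operad map $\Br\to\Dend$, and composing with the map $\PL\to\Br$ of the previous subsection gives a chain $\PL\to\Br\to\Dend$. The associated restriction functors have left adjoints — the various universal enveloping functors — and since left adjoints compose, the universal enveloping dendriform algebra of a pre-Lie algebra is the universal enveloping dendriform algebra of its universal enveloping brace algebra.

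First I would reduce the statement to a module-freeness assertion. By Theorem \ref{th:brace} the operad $\Br$ is free as a right $\PL$-module, say $\Br\cong\calT_\calR^r\circ\PL$, where $\calT_\calR^r$ is the species computing the functor $\calU$ of the preceding corollary. If I can show that $\Dend$ is free as a right $\Br$-module, $\Dend\cong\calW\circ\Br$ for some species $\calW$, then using associativity of the plethystic product together with the fact that the right $\PL$-action on $\Dend$ factors through its right $\Br$-action, I would obtain $\Dend\cong\calW\circ\calT_\calR^r\circ\PL$, so that $\Dend$ is free as a right $\PL$-module. Feeding this into \cite[Th.~3.1]{dotsenko2018endofunctors} exactly as in the brace case produces the desired analytic endofunctor $\calV=\calW\circ\calU$.

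The main step, and the one I expect to be the principal obstacle, is therefore to establish that $\Dend$ is free as a right $\Br$-module. The natural guess is the structural isomorphism $\Dend\cong\Ass\circ\Br$, expressing a dendriform algebra as a nonunital associative algebra object in brace algebras; this is consistent with a dimension count, since with $c(x)=\sum_{n\ge 0}C_n x^n$ the exponential generating series of $\Dend$ and $\Br$ are $c(x)-1$ and $x\,c(x)$, and substituting the latter into the series $u/(1-u)$ of $\Ass$ returns $c(x)-1$. Granting this, one has $\calW=\Ass$, so $\calW(V)=T^+(V)$ is the free nonunital associative algebra and $\calV(L)=T^+(\calU(L))$.

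To prove the freeness $\Dend\cong\Ass\circ\Br$ I would either appeal to the structure theory of dendriform and brace bialgebras (the Cartier--Milnor--Moore type theorem of \cite{MR1879927,MR1813766}, which identifies the primitives of a connected dendriform bialgebra with a brace algebra), or argue combinatorially in the spirit of the present note, splitting off the top-level factorisation of a dendriform operation into a $*$-product of brace-indecomposable pieces and recording the latter as a right $\Br$-factor. Making this splitting canonical and $S_n$-equivariant — so that it yields a genuine free basis rather than merely matching dimensions — is the delicate point.
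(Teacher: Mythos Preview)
Your proposal is correct and follows essentially the same route as the paper: reduce to right-module freeness of $\Dend$ over $\Br$, invoke the Chapoton--Ronco structure theory (\cite{MR1879927,MR1813766}) to obtain $\Dend\cong\Ass^*\circ\Br$ (your $\Ass$ is the same species), compose with the isomorphism $\Br\cong\calT_\calR^r\circ\PL$ from Theorem~\ref{th:brace}, and apply \cite[Th.~3.1]{dotsenko2018endofunctors}. The paper simply cites the Cartier--Milnor--Moore result rather than re-proving it, so your hesitation about the ``delicate point'' is unnecessary here.
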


\begin{proof}
From earlier work of Chapoton \cite{MR1879927} and Ronco \cite{MR1813766}, it follows that the operad of dendriform algebras $\Dend$ is a free right $\Br$-module; in fact, the ``natural'' space of generators of that module is $\Ass^{*}$: each free dendriform algebra $\Dend(V)$ has a structure of a cofree conilpotent coassociative coalgebra, and the space of cogenerators of that coalgebra is precisely $\Br(V)$. Since we just proved that there is a right $\PL$-module isomorphism $\Br\cong\calT_\calR^r\circ\PL$, we have a right $\PL$-module isomorphism 
 \[
\Dend\cong\Ass^*\circ\Br\cong\Ass^*\circ\calT_\calR^r\circ\PL,
 \]
and therefore the result of \cite[Th.~3.1]{dotsenko2018endofunctors} implies that the endofunctor $\calV:=\Ass^*(\calT_\calR^r)$ works: the underlying vector space of the universal enveloping dendriform algebra of a pre-Lie algebra $L$ is isomorphic to $\Ass^*(\calT_\calR^r(L))$ functorially with respect to pre-Lie algebra maps.
\end{proof}

This last argument raises a natural question as to whether it is possible to define a $\calC$-enriched version of the dendriform operad so that the operad $\PL_\calC$ acts on primitive elements in $\Ass^c$-$\Dend_\calC$-bialgebras. 

\subsection{Minimal species of generators of the operad \texorpdfstring{$\PL_\calC$}{PLC}}

According to Corollary \ref{cor:PLgen}, the operad $\PL_\calC$ is generated by $\calC$-corollas. Analysing the argument that proves this result, it is in fact easy to prove that if $\calX$ generates the Cauchy monoid $\calC^{\tsbullet}$, then the operad $\PL_\calC$ is generated by $\calX$-corollas. 

\begin{example}
It is well known that the operad $\PL=\PL_{\uCom^*}$ has two different sets of generators: it can be generated by one binary operation (the pre-Lie product) or by the so called ``symmetric braces'' \cite{MR2175956}. The symmetric braces are exactly all $\uCom^*$-corollas. However, if we regard the Hopf cooperad $\uCom^*$ as an algebra for the operad $\uCom$, it is isomorphic to the free algebra on the singleton species, and the binary generator of $\PL$ corresponds to the corolla with one root and one non-root vertex, with the root vertex decorated by the singleton species. 
\end{example}

In the case of the brace operad, we obtain its minimal set of generators that seems to have never been studied before.

\begin{proposition}
The operad $\Br$ is generated by $\Lie^*$-corollas.
\end{proposition}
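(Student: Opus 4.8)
The plan is to combine the observation made just before the statement --- that a generating species $\calX$ of the Cauchy monoid $\calC^{\tsbullet}$ produces a generation of $\PL_\calC$ by $\calX$-corollas --- with an explicit identification of a minimal generating species of $(\uAss^*)^{\tsbullet}$. Thus everything reduces to showing that the Cauchy monoid $(\uAss^*)^{\tsbullet}$ is generated by the subspecies $\Lie^*$. We already know from the proof of Theorem~\ref{th:brace} (through the second part of Proposition~\ref{prop:twocases}) that $(\uAss^*)^{\tsbullet}$ is a free commutative Cauchy monoid; being connected and commutative over a field of characteristic zero, it is the free commutative Cauchy monoid on its species of primitives, which here coincides with its species of indecomposables. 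It remains to identify that species with $\Lie^*$.

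First I would make the Cauchy monoid structure on $(\uAss^*)^{\tsbullet}$ completely explicit. Since $\uAss$ is the linearisation of the set operad of linear orders, each component $\uAss^*(I)$ is the commutative algebra whose basis dual to linear orders consists of orthogonal idempotents, so that the Hopf (Hadamard) product of Proposition~\ref{prop:monoid} is the pointwise one. Dualising, the unit-insertion maps \eqref{eq:unit} become the duals of the restriction maps $\uAss(I\sqcup J)\to\uAss(I)$ that delete the elements of $J$ from a linear order. A direct computation then shows that the resulting product $\uAss^*(I)\otimes\uAss^*(J)\to\uAss^*(I\sqcup J)$ sends a pair of dual basis elements to the sum of the dual basis elements indexed by all linear orders on $I\sqcup J$ restricting to the two given orders; in other words, $(\uAss^*)^{\tsbullet}$ is the shuffle Cauchy monoid. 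Likewise, the comultiplication supplied by the map $\nu=\id\colon\uAss^*\to\uAss^*$ in Proposition~\ref{prop:twocases} is the deconcatenation coproduct.

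Next I would invoke duality in the species setting. The free Cauchy monoid on the singleton species is $\uAss\cong\bigoplus_{n\ge 0}\mathbb{X}^{\cdot n}$, and equipped with its unshuffle coproduct it is the cocommutative twisted Hopf algebra which is the universal envelope of the free twisted Lie algebra on $\mathbb{X}$; the latter is precisely the species $\Lie$, so the species of primitives of $\uAss$ is $\Lie$. The commutative twisted Hopf algebra $(\uAss^*)^{\tsbullet}$ identified above is its graded dual (legitimate because all components are finite dimensional), whence its species of indecomposables is $\Lie^*$. As $(\uAss^*)^{\tsbullet}$ is free commutative, indecomposables, primitives, and a minimal generating species all coincide, so $\Lie^*$ generates the Cauchy monoid $(\uAss^*)^{\tsbullet}$ and does so minimally.

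Finally, applying the observation preceding the proposition with $\calX=\Lie^*$ and $\calC=\uAss^*$ yields that $\Br=\PL_{\uAss^*}$ is generated by $\Lie^*$-corollas. The main obstacle I anticipate is the bookkeeping in the middle steps: pinning down the Cauchy monoid product as the shuffle product and the relevant coproduct as deconcatenation, and then transporting the classical Milnor--Moore/Poincar\'e--Birkhoff--Witt identification of primitives through species-level duality so as to be certain that the generating species is $\Lie^*$ on the nose, and not merely an abstractly isomorphic species of the correct dimensions.
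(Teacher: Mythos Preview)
Your proposal is correct and follows essentially the same approach as the paper: both reduce to identifying a generating species of the Cauchy monoid $(\uAss^*)^{\tsbullet}$ and then apply the observation about $\calX$-corollas, and both invoke (the dual of) the Cartier--Milnor--Moore theorem to conclude that this generating species is $\Lie^*$. The paper's proof is a two-line sketch that simply cites the dual Cartier--Milnor--Moore theorem, whereas you additionally unpack the shuffle/deconcatenation structure and the duality with the concatenation/unshuffle Hopf structure on $\uAss$; this extra detail is accurate and addresses exactly the bookkeeping concern you flag at the end.
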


\begin{proof}
We already saw that considering the operad $\PL_{\uAss^*}$ amounts to considering planar rooted trees; in this case the corollas are the classical braces. To determine the species of generators of the Cauchy monoid $(\uAss^*)^{\tsbullet}$, we invoke the dual of the Cartier--Milnor--Moore theorem which easily implies that $(\uAss^*)^{\tsbullet}$ is isomorphic to the free commutative Cauchy monoid generated by the species~$\Lie^*$. 
\end{proof}

It would be interesting to study the combinatorics of this presentation, as well as minimal presentations of the operad $\PL_\calC$ for other choices of $\calC$. It is also reasonable to try and describe an analogue of the brace operad that acts on the Harrison complex of a commutative associative algebra. The Koszul dual cooperad of the commutative operad is $\Lie^*$ which does not have a Hopf structure, so the approach of \cite{MR3411136} is not directly applicable in this case.      

\subsection{Relationship to the twisting procedure}

Every operad $\PL_\calC$ receives the unit map from the operad $\PL_{\uCom^*}=\PL$, and therefore a map from the operad $\Lie$; therefore, as pointed in \cite{MR3411136}, to each such operad one may apply the construction of operadic twisting \cite{MR3299688}. It would be interesting to determine which of the operads $\PL_\calC$ have interesting homotopical properties with respect to operadic twisting, for example, for which of them one has
 \[
H_0(\Tw(\PL_\calC),d_{\Tw})\cong\Lie ,
 \]
generalising the existing results for the pre-Lie and the brace operad, see \cite{dotsenko2020homotopical}.

\section*{Acknowledgements} The first author is grateful to Mario Gon\c{c}alves Lamas for questions about brace algebras that prompted this note, and to Bruno Vallette and Pedro Tamaroff for useful comments. We thank the anonymous referee for thorough reading of the paper which made us realise that the original version of the main theorem was stated in higher generality than it actually holds. 

\bibliographystyle{plain} 
\bibliography{BracePBW.bib}

\end{document}